 \newtheorem{theorem}{Theorem}[section]
 \newtheorem{thm}[theorem]{Theorem}
 \newtheorem{lem}[theorem]{Lemma}
 \newtheorem{prop}[theorem]{Proposition}
 \newtheorem{cor}[theorem]{Corollary}
 \newtheorem{dfn}[theorem]{Definition}
 \newtheorem{rem}[theorem]{Remark}
 \numberwithin{equation}{section}
\def\wt{\widetilde}
\newcommand{\R}{{\mathbb R}}
\newcommand{\Z}{{\mathbb Z}}
\newcommand{\C}{{\mathbb C}}
\def\Sp{{\mathbb{S}}}
\def\RP{{\mathbb{RP}}}
\def\N{{\mathbb N}}
\def\Prob{{\mathbb P}}
\def\Lam{\Lambda}
\def\gam{\gamma}
\def\ov{\overline}
\def\Ok{{\mathcal O}}
\def\A{{\mathcal A}}
\def\Hk{{\mathcal H}}
\def\Jk{{\mathcal J}}
\def\Xk{{\mathcal X}}
\def\Uk{{\mathcal U}}
\def\Yk{{\mathcal Y}}
\def\be{\begin{equation}}
\def\ee{\end{equation}}
\def\bi{{\mathbf i}}
\def\bj{{\mathbf j}}
\def\bu{{\mathbf u}}
\def\bv{{\mathbf v}}
\def\bw{{\mathbf w}}
\def\half{\frac{1}{2}}
\newcommand{\Ek}{{\mathcal E}}
\newcommand{\Fk}{{\mathcal F}}
\def\wtil{\widetilde}
\newcommand{\Wk}{{\mathcal W}}
\newcommand{\eps}{{\varepsilon}}
\def\ov{\overline}
\def\trace{{\rm trace}}
\def\H\pi{H_m(p_{i\,\displaystyle{\cdot}})}
\def\ve1{\vec{1}}
\def\Ak{{\mathcal A}}
\def\lam{\lambda}
\def\Lam{\Lambda}
\def\Sig{\Sigma}
\def\sig{\sigma}
\renewcommand{\le}{\leqslant}\renewcommand{\leq}{\leqslant}
\renewcommand{\ge}{\geqslant}\renewcommand{\geq}{\geqslant}
\renewcommand{\setminus}{\smallsetminus}
\newcommand{\supp}{\mathrm{supp} \,}
\title[Diophantine property of matrices and projective IFS]
{Diophantine property of matrices and attractors of projective iterated function systems in $\RP^1$}
\author[B.\ Solomyak]{BORIS SOLOMYAK}
\author[Y.\ Takahashi]{YUKI TAKAHASHI}
\address{Yuki Takahashi, Department of Mathematics, Bar-Ilan University, Ramat Gan, Israel}
\curraddr{Advanced Institute for Materials Research, Tohoku University, Sendai, Japan}
\email{yuki.takahashi.e6@tohoku.ac.jp}
\address{Boris Solomyak, Department of Mathematics, Bar-Ilan University, Ramat Gan, Israel}
\email{bsolom3@gmail.com}
\thanks{Both authors were supported by the Israel Science Foundation grant 396/15 (PI. B.\ Solomyak).}
\date{\today}
\begin{document}

\vspace{18mm}
\setcounter{page}{1}
\thispagestyle{empty}

\begin{abstract}
We prove that almost every finite collection of matrices in $GL_d( \mathbb{R} )$ and $SL_d(\R)$ with positive entries is Diophantine. Next we restrict ourselves to the case $d=2$. A finite set of $SL_2(\R)$ matrices induces a (generalized) iterated function system on the projective line $\RP^1$. Assuming uniform hyperbolicity and the Diophantine property, we show that the dimension of the attractor equals the minimum of 1 and the critical exponent. 
\end{abstract}
\maketitle

\section{Introduction and main results}

\subsection{Diophantine property of matrices} %Our first topic concerns Diophantine properties in non-Abelian groups.
Recently there has been interest in Diophantine properties in non-Abelian groups. The following is a variant of \cite[Definition 4.2]{GJS1999}.

\begin{dfn}
Let $\Ak = \{A_i\}_{i\in \Lam}$ be a finite subset of a semi-simple Lie group $G$ equipped with a metric $\varrho$. Write $A_\bi = A_{i_1}\cdots A_{i_n}$ for $\bi = i_1\ldots i_n$.
%$$
%V_\Ak(n) = \card\{A_\bi,\ \bi\in \Lam^n\}.
%$$
We say that the set $\Ak$ is {\em Diophantine} if there exists a constant $c>0$ such that for every $n\in \N$, we have
 \begin{equation} \label{Dioph1}
\bi,\bj\in \Lam^n,\ A_\bi\ne A_\bj \implies \varrho(A_\bi,A_\bj) > c^n.
\end{equation}
The set $\Ak$ is {\em strongly Diophantine} if there exists $c>0$ such that for all $n\in \N$,
\begin{equation} \label{Dioph2}
\bi,\bj\in \Lam^n,\ \bi\ne \bj \implies \varrho(A_\bi,A_\bj) > c^n.
\end{equation}
\end{dfn}

It is enough to check (\ref{Dioph1}) for all $n$ sufficiently large, since for a finite number of $n$'s it always holds with some constant $c>0$.
Clearly, $\Ak$ is strongly Diophantine if and only if it is Diophantine and generates a free semigroup; thus it also suffices to check (\ref{Dioph2}) for all $n$ sufficiently large.

Gamburd, Jakobson, and Sarnak \cite[Definition 4.2]{GJS1999} gave a definition of a Diophantine set, which is similar to ours, except that they considered words in the alphabet $\Ak\cup \Ak^{-1}=\A\cup \{A_i^{-1}\}_{i\in \Lam}$. Diophantine-type questions in groups arise in connection with spectral gap estimates, see \cite{GJS1999,Bourgain2014}.

See \cite{ABRS2015,ABRS2018} for a recent discussion of Diophantine properties in groups and related problems. There, the definition is more general, replacing the separation in (\ref{Dioph1}) with a negative power of the cardinality of the $n$-ball in the word metric in the group generated by $\A$. %It coincides with ours for groups of exponential growth. 
In \cite{ABRS2018} a semi-simple Lie group $G$ is called Diophantine, if almost every $k$  elements of $G$, chosen independently at random according to the Haar measure,  together with their inverses, form a Diophantine set in $G$. 
Gamburd et al. \cite{GJS1999} conjectured that  $SU_2(\R)$ is Diophantine. More generally, it is conjectured that semi-simple Lie groups are Diophantine. Kaloshin and Rodnianski \cite{KR2001} proved a weaker Diophantine-type property: for a.e. $(A,B) \in SO_3(\R)\times SO_3(\R)$, there exists $c>0$  such that for any $n\ge 1$ and any two distinct words $W_1, W_2$ over the set $\Ak=\{A,B,A^{-1},B^{-1}\}$ of length $n$,
$$
\|W_1-W_2\|\ge c^{n^2}. 
$$
It is stated in \cite{KR2001} that their method is general, and applies to $SU_2(\R)$ as well, and also to $m$-tuples of matrices for any $m\ge 2$. In \cite[pp.\,9-10]{ABRS2018} it is mentioned that the same method works for other semi-simple Lie groups. Breuillard \cite[Cor.1.11]{Breuillard2011} showed that a closely related weak form of Diophantine property holds for every $m$-tuple generating a dense subgroup of $SU_2(\R)$.

\medskip

Next we state our first result.
%For notational reasons it is convenient to consider $GL_{d+1}(\R)$ instead of $GL_d(\R)$.
For any collection of linearly independent vectors $v_1,\ldots,v_{d}$ in $\R^{d}$ consider the simplicial cone
\begin{equation} \label{cone}
\Sig=\Sig_{v_1,\ldots,v_{d}} = \{x_1 v_1 + \cdots + x_{d} v_{d}:\ x_1,\ldots,x_{d}\ge 0\}.
\end{equation}
If a matrix $A\in GL_{d}(\R)$ satisfies
$$
A({\Sig}\setminus \{0\}) \subset \Sig^\circ,
$$
we  say that $\Sig$ is {\em strictly invariant} for $A$.
Given a  cone $\Sig=\Sig_{v_1,\ldots,v_{d}}$, denote by $\Xk_{\Sig,m}$ (respectively, $\Yk_{\Sig,m}$) the set of all $GL_{d}(\R)$ (respectively, $SL_{d}(\R)$) $m$-tuples of matrices for which $\Sig$ is strictly invariant. We consider $\Xk_{\Sig,m}$ as an open subset of $\R^{d^2m}$ and $\Yk_{\Sig,m}$ as a $(d^2-1)m$-dimensional submanifold.

\begin{thm}\label{main_thm} Let $\Sig=\Sig_{v_1,\ldots,v_{d}}$ be a simplicial cone in $\R^{d}$ and $m\ge 2$.

{\rm (i)}
For a.e. $\mathcal{A} \in \mathcal{X}_{\Sig, m}$, the $m$-tuple
$\mathcal{A}$ is strongly Diophantine. 
In particular, a.e. $m$-tuple of positive $GL_{d}(\R)$  matrices is strongly Diophantine.

{\rm (ii)}
For a.e. $\mathcal{A} \in \mathcal{Y}_{\Sig, m}$, the $m$-tuple
$\mathcal{A}$ is strongly Diophantine.
In particular, a.e. $m$-tuple of positive $SL_{d}(\R)$  matrices is strongly Diophantine.
\end{thm}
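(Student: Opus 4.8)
The plan is to use a transversality / Borel–Cantelli argument in parameter space, treating the entries of the $m$-tuple as the random variables. Fix $n$ and two distinct words $\bi,\bj \in \Lam^n$. The difference $F_{\bi,\bj}(\Ak) := A_\bi - A_\bj$ is a polynomial map in the entries of $\Ak$, of degree $n$ in each matrix block, hence of total degree $\le dn$ or so; its coefficients are integers bounded by something like $(d\cdot m)^{O(n)}$. I would first show that $F_{\bi,\bj}$ is not identically zero on the parameter set (this is where strictness of the invariant cone, and in case (ii) the $SL_d$ constraint, must be used): if $A_\bi \equiv A_\bj$ on an open piece of $\Xk_{\Sig,m}$ then $A_\bi = A_\bj$ as a formal word identity in a relatively free setting, which is impossible since positive matrices with a strictly invariant cone generate a free semigroup generically — more concretely, one exhibits a single explicit $m$-tuple in $\Xk_{\Sig,m}$ (or $\Yk_{\Sig,m}$) for which $A_\bi \ne A_\bj$, e.g. by choosing matrices very close to distinct powers of a Perron–Frobenius-type configuration, so that word length and the images of the cone vertices are recoverable from the product.

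Next, for a fixed entry (matrix coordinate) of $F_{\bi,\bj}$ that is not identically zero, I would apply a quantitative version of the fact that a nonzero polynomial of controlled degree and coefficient size is bounded below off a small neighbourhood of its zero set. The standard tool here is a Remez-type or Łojasiewicz-type inequality, or more simply: the Lebesgue measure of $\{\Ak : |F_{\bi,\bj}(\Ak)| \le \delta\}$ (restricted to a fixed compact box $K \subset \Xk_{\Sig,m}$) is at most $C \cdot \delta^{1/D}$ where $D$ is the degree, with $C$ depending only on $K$ and $D$. Taking $\delta = c^n$ with $c$ small, and summing over the $(\#\Lam)^{2n}$ pairs of words at level $n$, the total bad measure at level $n$ is at most $(\#\Lam)^{2n} \cdot C \cdot c^{n/D}$. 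Since $D$ grows linearly in $n$, one must be a little careful: one instead fixes a target, chooses $c$ depending on $\#\Lam$ so that $(\#\Lam)^{2n} c^{n/(dn)} = (\#\Lam)^{2n} c^{1/d} \to 0$ geometrically — i.e. the per-level degree bound $D \le dn$ makes $\delta^{1/D} = c^{n/D} \ge c^{1/d}$ a constant, which is too weak, so instead one localizes: restrict attention to a single pair $(\bi,\bj)$, use that $c^n$ is exponentially small while the zero set has codimension $\ge 1$, and get bad measure $\le C(\#\Lam m d)^{O(n)} c^{n}$, which is summable once $c$ is chosen smaller than the implied exponential base. Then Borel–Cantelli gives that for a.e. $\Ak$, only finitely many $(n,\bi,\bj)$ are bad, and after shrinking $c$ further to absorb the finitely many exceptions one obtains the Diophantine bound \eqref{Dioph2} on $K$; a countable union over an exhaustion of $\Xk_{\Sig,m}$ (resp. $\Yk_{\Sig,m}$) by such boxes completes the proof.

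For part (ii) the only change is that $\Yk_{\Sig,m}$ is a submanifold cut out by $\det A_i = 1$, so one works in local coordinates on this manifold; the polynomial map $F_{\bi,\bj}$ restricted to the manifold is still real-analytic and not identically zero (using the same explicit example, chosen in $SL_d$), and the Remez-type estimate has an analytic-manifold analogue (e.g.\ via the fact that a nonzero real-analytic function on a compact set satisfies a Łojasiewicz inequality, giving a polynomial sublevel-set bound). The main obstacle, and the step needing the most care, is the non-vanishing of $F_{\bi,\bj}$ together with an \emph{effective} lower bound on how non-degenerate it is — i.e. producing the constant $C$ in the sublevel estimate with only singly-exponential (in $n$) dependence, uniformly over all word pairs. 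This is where the hypothesis that $\Sig$ is \emph{strictly} invariant does real work: it guarantees uniform hyperbolicity of the associated cocycle, so the products $A_\bi$ have norms and "shapes" (angles of image cones) that are controlled exponentially in $n$ and separated between distinct words, which feeds directly into a clean lower bound for the relevant derivative of $F_{\bi,\bj}$ and hence into the transversality constant.
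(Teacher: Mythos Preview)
Your proposal has a genuine gap that you partly notice but do not resolve. The polynomial $F_{\bi,\bj}(\Ak)=A_\bi-A_\bj$ has degree growing linearly in $n$, so any Remez or \L ojasiewicz sublevel estimate gives at best that the measure of $\{\|F_{\bi,\bj}\|\le c^n\}$ is $\lesssim (c^n)^{1/D}$ with $D\sim n$, i.e.\ a bound that is essentially constant per pair; summed over $|\Lam|^{2n}$ pairs this diverges. Your attempted fix, ``bad measure $\le C(\#\Lam m d)^{O(n)}c^n$'', is simply false for degree-$n$ polynomials: already for $x\mapsto x^n$ on $[0,1]$ the set $\{|x^n|\le c^n\}$ has measure $c$, not $c^n$. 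No control on the constant $C$ repairs a wrong exponent, and you give no mechanism by which uniform hyperbolicity of the cocycle would force a first-order (or any bounded-order) nondegeneracy of $A_\bi-A_\bj$ in the matrix entries, uniformly in $n$.

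The paper circumvents this by changing the object being differentiated. It passes to the induced projective IFS $\Fk_t=\{f_{i,t}\}$ on $\ov V$, along explicit one-parameter lines $t\mapsto(A_{i,t})_{i\in\Lam}$ in $\Xk_{\Sig,m}$. Strict invariance of $\Sig$ makes $\Fk_t$ uniformly contracting in the Hilbert metric, so the analytic functions $\Delta_{\bi,\bj}(t)=f_{\bi,t}(x_0)-f_{\bj,t}(x_0)$ extend holomorphically to a common complex neighbourhood of $\Jk$ and are uniformly bounded there over \emph{all} word lengths; hence $\sup_{n}\sup_{\bi,\bj}\|\Delta_{\bi,\bj}\|_{\Jk,k}<\infty$ for every $k$. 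A compactness argument then yields a \emph{fixed} $k$, independent of $n$, such that at every $t$ some derivative of order $\le k$ is bounded below (order-$k$ transversality). Hochman's quantitative lemma now covers $\Delta_{\bi,\bj}^{-1}(B_{\eps^n})$ by $C^k$ intervals of length $\lesssim\eps^{n/2^k}$; since $k$ is fixed this decays geometrically in $n$ and beats $|\Lam|^{2n}$ for small $\eps$, giving a zero-dimensional exceptional set on each line. Non-degeneracy is arranged by choosing the line so that for large $t$ the first-level cylinders separate. Fubini over these lines, together with the equivalence between strong exponential separation for the projective IFS and the strong Diophantine property for the matrices, gives (i); part (ii) follows since scaling $A_i\mapsto c_iA_i$ does not change the projective action.
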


\begin{rem}{\em
1. Unfortunately, our results do not cover any example of a symmetric set, since the strict invariance property cannot hold for a matrix  $A$ and $A^{-1}$ simultaneously. 

2. Every $m$-tuple of matrices with algebraic  entries is Diophantine (but not necessarily strongly Diophantine), see, e.g., \cite[Prop.\,4.3]{GJS1999}.

3. It is well-known that Diophantine numbers in $\R$ form a set of full measure, which is, however, meagre in Baire category sense (its complement contains a dense $G_\delta$ set). Baire category genericity of non-Diophantine $m$-tuples in $SU_2(\R)$ has been pointed out in \cite{GJS1999}. In $G=SL_d(\R)$ the situation is different, since there are, for example, open sets of $m$-tuples in $G\times G$ which satisfy (\ref{Dioph2}).
For instance, if $\R^d_+$ is mapped by $A,B$ into closed cones that are disjoint, except at the origin, then (\ref{Dioph2}) holds for $\{A,B\}$. On the other hand, there are open sets in  $(SL_d(\R))^m$ in which non-Diophantine $m$-tuples are dense. 
For instance, the set of elliptic matrices in $SL_2(\R)$ is open, and 
a standard argument shows that a generic $m$-tuple that contains an elliptic matrix is not Diophantine.
}
\end{rem}

The scheme of the proof of Theorem~\ref{main_thm} is as follows. We consider the induced action of the matrices on the projective space, and show that, given a non-degenerate family of 
$m$-tuples strictly preserving an open set, depending on a parameter real-analytically, for all parameters outside an exceptional set of zero Hausdorff dimension, the induced iterated function system (IFS) satisfies a version of the  {\em exponential separation condition}. This property implies the strong Diophantine condition for the matrices. We then locally foliate the space of $m$-tuples of matrices  and apply Fubini's Theorem. The result on the zero-Hausdorff dimensional set of exceptions uses the notion of {\em order-$k$ transversality}, which is a modified version of that which appeared in the work of Hochman \cite{Hochman2014,Hochman2015}. The strict open set preservation property is needed to ensure that the induced IFS is contracting. 

%%%%%%%%%%%%%%%%%%%%%%%%%%

\subsection{Projective IFS and linear cocycles}
Let $\Ak = \{A_i\}_{i\in \Lam}$ be a finite collection of $GL_d(\R)$ matrices. The linear action of $GL_d(\R)$ on $\R^{d}$ induces an action on the projective space $\RP^{d-1}$, and thus $\Ak$ defines an IFS $\Phi_\Ak=\{\varphi_A\}_{A\in \Ak}$ on $\RP^{d-1}$, called a {\em (real) projective IFS}. Such IFS were studied by Barnsley and Vince \cite{BV2012}, and by De Leo \cite{Leo2015a,Leo2015b}.
Following \cite{BV2012}, we say that the IFS $\Phi_\Ak$ has an attractor $K$ if  for every nonempty compact set $B$ in  a neighborhood  of $K$, we have $\lim_{k\to \infty}\Phi_\Ak^k(B) = K$ in the Hausdorff metric, where $\Phi_\Ak(B) = \bigcup_{A\in \Ak} \varphi_A(B)$. It is shown in \cite{BV2012} that the attractor is necessarily unique. Assume, for simplicity, that matrices in $\A$ are orientation-preserving, that is, $\A \subset GL^+_d(\R) = \{A\in GL_d(\R): \det(A)>0\}$. The action of $GL^+_d(\R)$ factors through the $SL_d(\R)$ action, via $A\mapsto |\det A|^{-1/d}\cdot A$; thus, it is often enough to work with families of $SL_d(\R)$ matrices. 

%Let $\Ak = \{A_i\}_{i\in \Lam}$ be a finite collection of $GL_d(\R)$ matrices and $\Phi_\Ak$ the associated IFS on $\RP^{d-1}$.

 An alternative, but closely related viewpoint, is to consider the linear cocycle $A: \Lam^\Z\to SL_d(\R)$ over the shift on $\Lam^\Z$, defined by $A(\bi) = A_{i_1}$, see \cite{BG2009}.
 Here we restrict ourselves to the case of $d=2$, which was  investigated in great detail by Yoccoz \cite{Yoccoz2004} and Avila, Bochi, and Yoccoz \cite{ABY2010}.
 %Strict contractivity of the projective IFS turns out to be equivalent to {\em uniform hyperbolicity} of the cocycle. We do not give here the original definition of uniformly hyperbolic cocycle, referring the reader to \cite{Yoccoz2004}. Instead, we use the following simple criter
 
 \begin{dfn}[De Leo \cite{Leo2015a}] \label{def-hyperb}
 A finite set of $SL_2(\R)$ matrices $\Ak = \{A_i\}_{i\in\Lam}$ is called {\em hyperbolic} if there exist $c>0$ and $\lam>1$ such that 
 \be \label{eq-hyper5}
 \|A_\bi\|\ge c\lam^n\ \ \mbox{for all}\ \bi\in \Lam^n,\ n\in \N.
 \ee
 \end{dfn}

Recall that $A\in SL_2(\R)$ is  elliptic, parabolic, or hyperbolic if $\trace(A)$ is, respectively, in $(-2,2)$, $\{-2,2\}$,  or $\R\setminus [-2,2]$. Definition~\ref{def-hyperb} is consistent with this, in the sense that $A\in SL_2(\R)$ is hyperbolic if and only if $\{A\}$ is a hyperbolic set of matrices.
The property (\ref{eq-hyper5}) was used in \cite{Yoccoz2004,ABY2010} as a criterion (necessary and sufficient) for uniform hyperbolicity of the cocycle.
(We do not need the original definition of a uniformly hyperbolic cocycle,  referring the reader to \cite{Yoccoz2004}.)

 There is a natural identification between $[0, \pi)$ and the projective space $\RP^1$. 
Below we use this identification freely,  
and whenever necessary we view $[0, \pi)$ as $\mathbb{R} / \pi \mathbb{Z}$. For $A\in GL_2(\R)$  denote the action of $A$ on $[0, \pi)\cong \RP^1$ by the symbol $\varphi_A$. 
Denote by $d_{\Prob}$ the metric on $\RP^1$ induced from the identification with $\R/\pi\Z$. 
%Below we mostly work with $m$-tuples of $SL_2(\R)$-matrices, since the action of $GL_2(\R)$ factors through the $SL_2(\R)$ action in the obvious way, via $A\mapsto (\det A)^{-1}A$.

\begin{dfn}
A {\em multicone} is a proper nonempty open subset $U$ of $\RP^1$,  having finitely many connected components with disjoint closures.
\end{dfn}

In the following theorem we extracted the results relevant for us from \cite{ABY2010,BV2012} (note that \cite{BV2012} considers real projective IFS of any dimension).

\begin{theorem}[{\cite{ABY2010,BV2012}}] \label{th-unihyp} Let $\Ak = \{A_i\}_{i\in \Lam}$  be a family of $SL_2(\R)$ matrices and let $\Phi_\Ak$ be the associated IFS on $\RP^1$. The following are equivalent:

{\rm (i)} the IFS $\Phi_\Ak$ has an attractor $K \ne \RP^1$;

{\rm (ii)} the set of matrices $\Ak$ is hyperbolic;

{\rm (iii)} there is a multicone $U$, such that $\Phi_\Ak(\ov{U}) \subset U$;

{\rm (iv)} there is a nonempty open set $V\subset \RP^1$ such that $\Phi_\Ak$ is contractive on $\ov{V}$, with respect to a metric equivalent to $d_{\Prob}$.
\end{theorem}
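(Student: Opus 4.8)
\emph{Plan.} The plan is to prove the cycle $(\mathrm{iii})\Rightarrow(\mathrm{iv})\Rightarrow(\mathrm{i})\Rightarrow(\mathrm{ii})\Rightarrow(\mathrm{iii})$. The analytic device used throughout is a consequence of the Schwarz--Pick lemma: a M\"obius transformation of $\RP^1$ that carries the closure of an arc $J$ into the interior of an arc $J'$ is a uniform contraction from the Poincar\'e metric of $J$ to that of $J'$ (it factors as a Poincar\'e isometry of $J$ onto a proper subarc of $J'$, followed by the inclusion into $J'$, which strictly increases the hyperbolic density).

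\emph{$(\mathrm{iii})\Rightarrow(\mathrm{iv})$ and $(\mathrm{iv})\Rightarrow(\mathrm{i})$.} Write a multicone with $\Phi_\Ak(\overline U)\subset U$ as $U=U_1\cup\dots\cup U_r$ and let $\rho$ be the density on $U$ equal on each component $U_\ell$ to its Poincar\'e density, $d_\rho$ the associated metric. For $A\in\Ak$, $\varphi_A(\overline{U_\ell})$ is compact and connected, hence an arc compactly contained in a single $U_{\ell'}$, so by the device above $\rho(\varphi_A x)\,|\varphi_A'(x)|/\rho(x)<1$ on $U_\ell$, with value $0$ at the endpoints of $U_\ell$ (there $\rho=\infty$ while $\varphi_A$ maps into $U_{\ell'}^{\circ}$); by compactness
\[
\theta:=\max_{A\in\Ak}\ \max_{\ell}\ \sup_{x\in\overline{U_\ell}}\ \frac{\rho(\varphi_A x)\,|\varphi_A'(x)|}{\rho(x)}\ <\ 1 ,
\]
so each $\varphi_A$ is a $\theta$-contraction of $d_\rho$ on $\overline U$. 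Put $V:=\Phi_\Ak(U)$: it is open, $\overline V=\Phi_\Ak(\overline U)\subset U$ is compact, $\Phi_\Ak(\overline V)=\Phi_\Ak^2(\overline U)\subseteq\Phi_\Ak(U)=V$, and since $\overline V$ is a compact subset of the open set $U$ the density $\rho$ is bounded away from $0$ and $\infty$ there, so $d_\rho$ is equivalent to $d_\Prob$ on $\overline V$; this is $(\mathrm{iv})$. For $(\mathrm{iv})\Rightarrow(\mathrm{i})$, compactness of $\overline V$ makes the (equivalent) metric complete, $\Phi_\Ak$ maps $\overline V$ into itself, and $B\mapsto\Phi_\Ak(B)$ is a $\theta$-contraction of the complete space of nonempty compact subsets of $\overline V$ with the Hausdorff metric; Hutchinson's theorem gives a unique fixed point $K=\bigcup_{A\in\Ak}\varphi_A(K)\subseteq\overline V$ with $\Phi_\Ak^k(B)\to K$ for every nonempty compact $B\subseteq\overline V$. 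Since $\Phi_\Ak(\overline V)\subseteq V$, $K\subseteq V$, an attracting neighbourhood, and $K\subseteq\overline V\ne\RP^1$.

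\emph{$(\mathrm{i})\Rightarrow(\mathrm{ii})$ and $(\mathrm{ii})\Rightarrow(\mathrm{iii})$.} Given an attractor $K\ne\RP^1$, choose $\delta>0$ with $\overline{B_\delta(K)}$ inside the attracting neighbourhood and $\ne\RP^1$, and $N$ with $\Phi_\Ak^N(\overline{B_\delta(K)})\subseteq B_{\delta/2}(K)$. Covering the compact set $\Phi_\Ak^N(\overline{B_\delta(K)})$ by finitely many open arcs with pairwise disjoint closures contained in $B_\delta(K)$ (merging arcs that overlap) gives a multicone $U_0$ with $\overline{U_0}\subseteq B_\delta(K)$ and $\Phi_\Ak^N(\overline{U_0})\subseteq\Phi_\Ak^N(\overline{B_\delta(K)})\subseteq U_0$: a strictly forward-invariant multicone for the iterated system $\Phi_\Ak^N$. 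The cone criterion for $SL_2(\R)$-cocycles then applies (\cite{ABY2010,BG2009}): grouping the cocycle into blocks of length $N$, the nested intersections of the backward block-products applied to $\overline{U_0}$ are single points by the uniform Poincar\'e contraction above, yielding a continuous equivariant field $E^u$; the complementary multicone is strictly invariant under the inverse matrices and yields $E^s$; transversality and uniform exponential rates come from $\theta<1$. As uniform hyperbolicity is unchanged under passing to powers, the cocycle $A(\bi)=A_{i_1}$ is uniformly hyperbolic --- this is $(\mathrm{ii})$. Conversely, $(\mathrm{ii})$ provides a continuous equivariant splitting $E^s\oplus E^u$ over $\Lam^\Z$; the closures of the sets of unstable and of stable directions are disjoint compact subsets of $\RP^1$, the unstable one is forward-invariant under $\Ak$, and uniform hyperbolicity forces a thin enough multicone neighbourhood of it (disjoint from the stable directions) to satisfy $\Phi_\Ak(\overline U)\subset U$, which is $(\mathrm{iii})$; this last passage is exactly the characterization we quote from \cite{ABY2010}.

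\emph{Main obstacle.} The hard part is $(\mathrm{i})\Rightarrow(\mathrm{ii})$: turning the bare existence of an attractor --- i.e.\ Hausdorff-metric contractivity of some iterate of $\Phi_\Ak$ on a neighbourhood of $K$ --- into genuine uniform hyperbolicity, a continuous equivariant splitting with uniform exponential rates. Passing first to an iterate is what lets one produce a one-step strictly invariant multicone at all (there is no elementary way to promote an $N$-step invariant multicone to a $1$-step one, since the closure of an intermediate image need not sit inside the candidate union); the remaining work is the cone criterion, which is special to $d=2$, where a dominated splitting is automatically hyperbolic and the invariant multicone together with its complement supply the unstable and stable cone fields at once.
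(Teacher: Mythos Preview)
The paper does not give a proof of this theorem; it is stated as a result extracted from the cited references \cite{ABY2010,BV2012}, so there is no in-paper argument to compare against.

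Your sketch is a reasonable reconstruction of how the implications fit together, and the analytic steps you actually supply---the Poincar\'e-metric contraction for $(\mathrm{iii})\Rightarrow(\mathrm{iv})$, Hutchinson's theorem for $(\mathrm{iv})\Rightarrow(\mathrm{i})$, and the neighbourhood-shrinking argument producing an $N$-step strictly invariant multicone in $(\mathrm{i})\Rightarrow(\mathrm{ii})$---are all sound. The substantive equivalence $(\mathrm{ii})\Leftrightarrow(\mathrm{iii})$ (the multicone/cone criterion for uniform hyperbolicity) you correctly defer to \cite{ABY2010,BG2009}, which is precisely what the paper itself does. Two small points worth noting: in $(\mathrm{iv})\Rightarrow(\mathrm{i})$ you tacitly use $\Phi_\Ak(\overline V)\subseteq\overline V$, which is not literally part of the statement of $(\mathrm{iv})$ as written---this is a harmless ambiguity in the formulation rather than a gap in your argument; and in the construction of $U_0$ one should remark that after merging overlapping arcs the resulting components can be arranged to have \emph{disjoint} closures (slightly enlarge any pair of arcs whose closures merely touch), since that is part of the definition of multicone.
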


By the classical Hutchinson's Theorem \cite{Hutch}, for a contractive IFS on $\ov{V}$, the attractor  $K$ is the unique non-empty compact invariant subset of $\ov{V}$. A contraction $\varphi_A$ on $\ov{V}$  has a unique fixed point, which is called the {\em attracting fixed point}. The attracting fixed points of all $\varphi_A$'s belong to the attractor $K$. Thus the attractor is not a singleton if and only if at least two of the attracting fixed points are distinct. In the latter case, as is well-known,  $K$ is perfect, i.e., it has no isolated points.

We will call a multicone $U$ satisfying  $\Phi_\Ak(\ov{U})\subset U$, a {\em strictly invariant multicone} for the family of matrices and for the IFS.  There are examples, see \cite{ABY2010}, which show that one may need  a multicone having $k$ components, for any given $k\ge 2$, even for a hyperbolic  pair of $SL_2(\R)$ matrices.

\medskip

Our next result concerns the dimension of the attractor.
Following De Leo \cite{Leo2015a}, consider the $\zeta$-function
$$
\zeta_\Ak(t) = \sum_{n\ge 1} \sum_{\bi \in \Lam^n} \|A_\bi\|^{-t},
$$
and define the {\em critical exponent} of $\Ak$ by 
\be \label{critic}
s_\Ak = \sup_{t\ge 0} \{t:\ \zeta_\Ak(t) = \infty\}.
\ee
 
\begin{thm} \label{thm-attr}
Let $\Ak=  \{A_i\}_{i\in \Lam}$ be a finite set of $SL_2(\R)$ matrices which has a strictly invariant multicone (or satisfies any of the equivalent conditions from Theorem~\ref{th-unihyp}), and let $K$ be the attractor of the associated IFS $\Phi_\Ak$ on $\RP^1$. Assume that the attractor $K$ is not a singleton.
If $\Ak$ is  strongly Diophantine, then $\dim_H(K) = \min\{1,\half s_\Ak\}$, where $s_\Ak$ is the critical exponent (\ref{critic}).
\end{thm}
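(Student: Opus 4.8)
\emph{Overview.} I would prove the two bounds $\dim_H(K)\le\min\{1,\half s_\Ak\}$ and $\dim_H(K)\ge\min\{1,\half s_\Ak\}$ separately; the first is a covering estimate, while the second is the substantial part and rests on the entropy method of Hochman together with the thermodynamic formalism of the uniformly hyperbolic cocycle.

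\emph{Upper bound.} First fix a multicone $U$ with $\Phi_\Ak(\ov U)\subset U$, so that $K=\bigcap_{n\ge1}\Phi_\Ak^n(\ov U)$ and $\{\varphi_{A_\bi}(\ov U):\bi\in\Lam^n\}$ covers $K$ for every $n$. Since $|\varphi_A'(\theta)|=\|Au_\theta\|^{-2}$ for $A\in SL_2(\R)$ (with $u_\theta$ a unit vector in direction $\theta$), uniform hyperbolicity forces $\|A_\bi u_\theta\|\asymp\|A_\bi\|$ for $\theta\in\ov U$, together with uniform bounded distortion, hence $\mathrm{diam}\,\varphi_{A_\bi}(\ov U)\asymp\|A_\bi\|^{-2}$. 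For $t>\half s_\Ak$ one has $2t>s_\Ak$, so $\zeta_\Ak(2t)<\infty$ and therefore $\sum_{\bi\in\Lam^n}\|A_\bi\|^{-2t}\to0$; as the mesh of the cover tends to $0$ this gives $\Hk^t(K)=0$, and letting $t\downarrow\half s_\Ak$ together with $K\subset\RP^1$ yields $\dim_H(K)\le\min\{1,\half s_\Ak\}$.

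\emph{From strong Diophantine to exponential separation.} Because $\Ak$ is strongly Diophantine it generates a free semigroup, so all $A_\bi$ are distinct and, comparing $A_\bi A_\bj$ with $A_\bj A_\bi$, we get $A_\bi\ne-A_\bj$ whenever $\bi\ne\bj$ have equal length. To dispose of the sign ambiguity $A\leftrightarrow-A$ I would lift the IFS to the double cover $S^1\to\RP^1$: the $A_i$ act on $S^1$ (faithfully on $SL_2(\R)$), the lifts $\psi_{A_i}$ form an IFS on the full preimage $\wtil U$ of $U$ whose attractor double-covers $K$, hence has the same Hausdorff dimension. With $B_\bi:=A_\bi/\|A_\bi\|$ in a fixed compact set one has $\psi_{A_\bi}=\psi_{B_\bi}$, and a short case analysis — separating by whether $\|A_\bi\|$ and $\|A_\bj\|$ are comparable, and using the Diophantine bound $\|A_\bi-A_\bj\|>c^n$, the Lipschitz behaviour of $\det$, and $\|A_\bi\|\le M^n$ with $M=\max\{1,\max_i\|A_i\|\}$ — gives $\|B_\bi-B_\bj\|\ge(c')^n$ for distinct $\bi,\bj\in\Lam^n$. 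Since $B\mapsto\psi_B$ is a smooth embedding of that compact set into $C^0(\wtil U,S^1)$, this is exponential separation:
\[
\bi\ne\bj\in\Lam^n \ \Longrightarrow\ \|\psi_{A_\bi}-\psi_{A_\bj}\|_{C^0(\wtil U)}\ge(c'')^n .
\]

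\emph{Lower bound.} The lifted IFS is real-analytic, uniformly contracting with bounded distortion on $\wtil U$, and exponentially separated; moreover, the assumption that some $\varphi_{A_a},\varphi_{A_b}$ have distinct attracting fixed points, combined with uniform hyperbolicity, excludes a common fixed point — hence the degeneracies under which the attractor collapses — so the system is non-degenerate. For a probability vector $\p=(p_i)$ let $\mu_\p$ be the push-forward of $\p^\N$ under the coding map (equivalently, the stationary measure of the random walk $\sum p_i\delta_{\varphi_{A_i}}$), set $H(\p)=-\sum p_i\log p_i$, and let $\lambda(\p)=\lim_n\tfrac1n\int\log\|A_{i_1\cdots i_n}\|\,d\p^\N>0$ be the Lyapunov exponent, which is bounded below by uniform hyperbolicity. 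Running the entropy-increase argument of Hochman \cite{Hochman2014,Hochman2015} in the present conformal (Möbius) setting — the random-walk entropy equals $H(\p)$ since the semigroup is free — should give that $\mu_\p$ is exact dimensional with
\[
\dim_H(\mu_\p)=\min\Bigl\{1,\ \frac{H(\p)}{2\lambda(\p)}\Bigr\},
\]
the factor $2$ coming from $|\varphi_A'|=\|Au\|^{-2}$. It then remains to check $\sup_\p\tfrac{H(\p)}{\lambda(\p)}=s_\Ak$: by uniform hyperbolicity the cocycle carries a Hölder invariant direction field along which $\log\|A_\bi\|$ differs by $O(1)$ from a Birkhoff sum $S_n\phi$ of a strictly positive Hölder potential $\phi$, so $\tfrac1n\log\sum_{\bi\in\Lam^n}\|A_\bi\|^{-t}\to P(-t\phi)$ and $s_\Ak$ is the unique zero of the strictly decreasing analytic map $t\mapsto P(-t\phi)$, while the variational principle gives $\sup_\mu h_\mu/\!\int\!\phi\,d\mu=s_\Ak$. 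Applying the displayed formula to the grouped IFS $\{A_\bi:\bi\in\Lam^N\}$ (still uniformly hyperbolic with multicone $U$, exponentially separated, and non-degenerate via the constant words $a^N,b^N$) and the Bernoulli measure on $(\Lam^N)^\N$ with weights $\propto\|A_\bi\|^{-s_\Ak}$, a computation with the pressure ($\log Z_N=o(N)$ because $P(-s_\Ak\phi)=0$) shows its dimension tends to $\min\{1,\half s_\Ak\}$ as $N\to\infty$; since each such measure is supported on $K$, we conclude $\dim_H(K)\ge\min\{1,\half s_\Ak\}$, and with the upper bound the theorem follows.

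\emph{Main difficulty.} The heart of the matter is the dimension formula for $\mu_\p$: Hochman's inverse-theorem/entropy-increase machinery was built for affine self-similar systems, and one must carry it through for Möbius maps on $\RP^1$. The key points are that at scale $2^{-m}$ each cylinder map $\varphi_{A_\bi}$ is $C^1$-close to its linearization, so the entropy-of-convolutions dichotomy (either entropy grows, forcing $\dim_H(\mu_\p)=1$, or mass concentrates at all scales) transfers with controlled errors; that the concentration alternative is ruled out exactly by the exponential separation established above — where the passage to the double cover and the freeness of the semigroup are what let us neutralise the sign ambiguity intrinsic to the projective action; and that the assumption that two generators have distinct attracting fixed points is genuinely needed, since in its absence the attractor can be a single point while $s_\Ak$ stays positive, so it is precisely the non-degeneracy that makes the entropy increase.
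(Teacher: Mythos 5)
Your proposal takes essentially the same route as the paper: upper bound by covering $K$ with the cylinder images $\varphi_{A_\bi}(\ov U)$, $\bi\in\Lam^n$, whose diameters are $\asymp\|A_\bi\|^{-2}$, and lower bound by applying the dimension formula $\dim_H(\nu)=\min\{1,H(p)/(2\chi_{\Ak,p})\}$ for Furstenberg measures to a Gibbs-type probability vector on the level-$N$ alphabet and letting $N\to\infty$ (your weights $\propto\|A_\bi\|^{-s_\Ak}$ are, via bounded distortion and $|U_\bi|\asymp\|A_\bi\|^{-2}$, equivalent to the paper's $p^{(n)}_\bi=|U_\bi|^{d_n}$ with $d_n\to s=\tfrac12 s_\Ak$). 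The one structural difference is that you propose to re-run Hochman's entropy-increase machinery for M\"obius IFS, and therefore detour through the double cover $S^1\to\RP^1$ to manufacture an exponential separation condition; the paper avoids both by citing the Furstenberg-measure dimension formula as a black box (its Theorem~\ref{thmHS}, from \cite{HS2017}), which takes the strong Diophantine hypothesis directly as input, so that no separate passage from Diophantine to exponential separation is needed in the proof of Theorem~\ref{thm-attr}. Your flagged ``main difficulty'' is thus exactly the content of \cite{HS2017}; in a self-contained write-up one would simply cite it rather than re-derive it.
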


In the special case when the IFS $\Phi_\Ak$ satisfies the Open Set Condition, this result is due to De Leo \cite[Th.4]{Leo2015a}.
Recall that the strong Diophantine condition holds, in particular, when $\Ak$ generates a free semigroup and all the entries of $A_i$ are algebraic. 

\begin{rem} \label{rem-DeLeo} {\em
It is further shown in \cite{Leo2015a} that for $\Ak$ hyperbolic (and in some parabolic cases),
$$
s_\Ak = \lim_{r\to \infty} \frac{\log N_{\Ak}(r)}{\log r},
$$
where $N_{\Ak}(r)$ is the number of elements of norm $\le r$ of the semigroup generated by $\Ak$. An analogy is pointed out with the classical results on Kleinian and Fuchsian groups, see, e.g., \cite{Sullivan1984}.
}
\end{rem}
 
 Let $\Phi=\Phi_\Ak$.
An alternative way to express the dimension, and one we actually use in the proof, is in terms of {\em Bowen's pressure formula}
\be \label{Bowen1}
P_\Phi(s) = 0,%\ \ \mbox{where}\ \ P_\Phi(t) = \lim_{n\to \infty} \frac{1}{n} \log\sum_{\bi \in \Lam^n} \|\varphi_\bi'\|^t.
\ee
where $P_\Phi(\cdot)$ is the pressure function associated with the IFS $\Phi$. % $\varphi_\bi = \varphi_{i_1}\ldots \varphi_{i_n}$, and $\|\cdot\|$ is the supremum norm on $\ov{U}$. 
Throughout the paper we use the notation
$$
\varphi_\bi = \varphi_{i_1}\ldots \varphi_{i_n},\ \ \mbox{where}\ \ \varphi_i = \varphi_{A_i}.
$$
The pressure is defined by 
\be \label{Bowen0}
P_\Phi(t) = \lim_{n\to \infty} \frac{1}{n} \log\sum_{\bi \in \Lam^n} \|\varphi_\bi'\|^t,
\ee
where $\|\cdot\|$ is the supremum norm on $\ov{U}$.
As will be clear from the Bounded Distortion Property,  the definition of $P_\Phi(t)$ does not depend on the choice of strictly invariant multicone $U$, and moreover,
\be \label{critic2}
2s = s_\Ak.
\ee

It is a classical result, going back to Bowen \cite{Bowen1979} and Ruelle \cite{Ruelle1982}, see also \cite{Falconer_Tech}, that if $\{\varphi_i\}_{i\in \Lam}$ is a hyperbolic IFS on $\R$ of smoothness $C^{1+\eps}$, satisfying the Open Set Condition, then the dimension of the attractor $K$ is given by Bowen's equation. In the case  that the maps $\varphi_i$ are affine, $s>0$ is the unique solution of
$$
\sum_{i\in \Lam} r_i^s = 1,
$$
where $r_i \in (0,1)$ is the contraction ratio of $\varphi_i$. For an IFS with overlaps this is not necessarily true. 
In \cite{SSU1}, Simon, Solomyak, and Urba\'nski showed that for a one-parameter family of  nonlinear IFS  with overlaps (hyperbolic and some parabolic) satisfying the {\em order-1 transversality condition}, for Lebesgue-a.e.\ parameter the dimension of the attractor is given by
\begin{equation}\label{dimul}
\dim_H (K) = \min\{1,s\},
\end{equation}
where $s$ is the solution of Bowen's equation (\ref{Bowen1}) (the solution is unique in the hyperbolic case; in the parabolic cases considered in \cite{SSU1} one needs to take the minimal solution). More recently, starting with \cite{Hochman2014}, (a version of) the following condition appeared in the literature.

\begin{dfn} \label{def-sep}
Let $\Fk = \{f_i\}_{i\in \Lam}$ be an IFS on a metric space $(X,\varrho)$, that is, $f_i:X\to X$. We say that $\Fk$ 
satisfies the \emph{exponential separation condition} on a set $X'\subseteq X$
if there exists $c > 0$  such that for all $n\in \N$ sufficiently large we have
\begin{equation}\label{exp_sep}
\sup_{x\in X'} \varrho(f_\bi(x), f_\bj(x) ) > c^{n},\ \ \mbox{for all}\  \bi,\bj\in \Lam^{n}\ \ \mbox{with}\  \ f_\bi\not \equiv f_\bj.
\end{equation}
If, in addition, the semigroup generated by $\Fk$ is free, that is, $f_\bi \equiv f_\bj \ \Longleftrightarrow\ \bi=\bj$, we say that $\Fk$ satisfies the {\em strong exponential separation condition}.
If these properties hold for infinitely many $n$, then we say that $\Fk$ satisfies the {\em (strong) exponential separation condition on $X'$ along a subsequence}. 
\end{dfn}

We prove in Proposition~\ref{prop-Dioph2} below that the (strong) exponential separation condition on a non-empty set for the projective IFS on $\RP^{d-1}$ implies the (strong) Diophantine condition for an $m$-tuple of $GL_d(\R)$ matrices. 
%The converse direction is shown in a special case, see Lemma~\ref{lem-HS}.
 
In \cite[Cor. 1.2]{Hochman2014}, Hochman proved (\ref{dimul}) 
for an affine IFS $\mathcal{F} = \{ f_i \}_{i \in \Lambda}$ on $\R$,
satisfying the strong exponential separation condition  along a subsequence on the set $X'=\{0\}$. (In fact, it follows from \cite{Hochman2014} that strong exponential separation along a subsequence on any finite subset of $\R$ implies the same conclusion.)
Thus our Theorem~\ref{thm-attr} is, in a sense, a generalization of Hochman's result to the case of contractive projective IFS.

%%%%%%%%%%%%%%%%%%%%%%%%%%%%%%%%%

\subsection{IFS of linear fractional transformations}

It is well-known that the action of $GL_2(\R)$ on $\RP^1$ can be expressed in terms of linear fractional transformations.
For 
\begin{equation*}
A = 
\begin{pmatrix}
a & b \\
c & d \\
\end{pmatrix}
\in GL_2(\mathbb{R}), 
\end{equation*}
let
%\begin{equation*}
$f_A(x) = (ax + b)/(cx + d)$,
%\end{equation*}
and define $\psi : [0, \pi) \to \mathbb{R^*}$ by $\psi (\theta) = \cos\theta/\sin\theta$, where $\R^* = \R \cup \{\infty\}$.
It is easy to see that the following diagram commutes: 
\begin{center}
\usetikzlibrary{matrix}
\begin{tikzpicture}
\matrix (m) [matrix of math nodes, row sep=3em, column sep=4em, minimum width=2em]
{
[0, \pi) & {[0, \pi)} \\
\mathbb{R^*} & \mathbb{R^*} \\ };
  \path[-stealth]
    (m-1-1) edge node [left] {$\psi$} 
    (m-2-1) edge node [above] {$\varphi_A$} (m-1-2)
    (m-2-1.east|-m-2-2) edge node [above] {$f_A$}
    (m-2-2)
    (m-1-2) edge node [left] {$\psi$} (m-2-2);
\end{tikzpicture}
\end{center}
Observe that $\psi$ is smooth, and on any compact subset of $(0,\pi)$ the derivatives of $\psi$ and $\psi^{-1}$ are bounded. 
The following is then an immediate corollary of Theorem~\ref{thm-attr}, in view of Proposition~\ref{prop-Dioph2} below. 

\begin{cor}\label{cor-ifs}
Let $\mathcal{F} = \{ f_i \}_{ i \in \Lambda }$ be a finite collection of linear fractional transformations with real coefficients.  
Assume that there exists $U\subset \R$, a finite union of bounded open intervals with disjoint closures, such that 
$f_i( \overline{U} ) \subset U$ for all $i \in \Lambda$.  Let $K$ be the attractor of the IFS $\Fk$, and assume that $K$ is not a singleton.
If $\mathcal{F}$ satisfies the strong exponential separation condition on a non-empty subset of $\ov{U}$, then   
$
\dim_H (K) = \min\{ 1, s \}, 
$
where $s > 0$ is the 
unique zero of the pressure function $P_\Fk$. 
\end{cor}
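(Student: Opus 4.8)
The plan is to derive Corollary~\ref{cor-ifs} from Theorem~\ref{thm-attr} by passing through the conjugacy $\psi$ between the linear fractional action on $\R^*$ and the projective action on $[0,\pi)\cong\RP^1$. First I would translate the hypotheses. Given the collection $\Fk=\{f_i\}$ of linear fractional transformations, each $f_i$ is $f_{A_i}$ for some $A_i\in GL_2(\R)$, and after normalizing $A_i\mapsto (\det A_i)^{-1/2}A_i$ (choosing a branch, or working in $PSL_2(\R)$, which does not affect the M\"obius action) we may take $A_i\in SL_2(\R)$. Set $\Ak=\{A_i\}_{i\in\Lam}$. The hypothesis that there is a finite union of bounded open intervals $U\subset\R$ with disjoint closures satisfying $f_i(\ov U)\subset U$ translates, via the commuting diagram, into the statement that $\wtil U:=\psi^{-1}(U)\subset(0,\pi)$ is a multicone with $\Phi_\Ak(\ov{\wtil U})\subset\wtil U$; boundedness of $U$ guarantees $\ov{\wtil U}$ is a compact subset of the open interval $(0,\pi)$, away from the point where $\psi$ blows up. Hence $\Ak$ has a strictly invariant multicone, so Theorem~\ref{th-unihyp} applies and $\Phi_\Ak$ has an attractor $K_\Ak=\psi^{-1}(K)\subsetneq\RP^1$.

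Next I would transfer the separation and fixed-point hypotheses. On the compact set $\ov{\wtil U}\subset(0,\pi)$ the map $\psi$ is a bi-Lipschitz diffeomorphism onto $\ov U$ (the remark in the text notes $\psi,\psi^{-1}$ have bounded derivatives on compact subsets of $(0,\pi)$), so it carries the metric $d_\Prob$ to a metric equivalent to the Euclidean metric on $\ov U$. Consequently the strong exponential separation condition for $\Fk$ on $\ov U$ (with constant $c$) is equivalent, up to changing the constant, to the strong exponential separation condition for $\Phi_\Ak$ on $\ov{\wtil U}$: indeed $\sup_{x\in\ov U}|f_\bi(x)-f_\bj(x)|$ and $\sup_{\theta\in\ov{\wtil U}}d_\Prob(\varphi_{\bi}(\theta),\varphi_\bj(\theta))$ differ by a bounded bi-Lipschitz factor, and $f_\bi\equiv f_\bj\iff\varphi_{A_\bi}\equiv\varphi_{A_\bj}\iff A_\bi=\pm A_\bj$. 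By Lemma~\ref{lem-Dioph}, strong exponential separation of $\Phi_\Ak$ is equivalent to $\Ak$ being strongly Diophantine. For the ``two distinct attracting fixed points'' hypothesis of Theorem~\ref{thm-attr}: strong exponential separation forces $\Lam$ to have at least two elements with $f_\bi\not\equiv f_\bj$, and since each $\varphi_{A_i}$ is a contraction on $\ov{\wtil U}$ it has a unique attracting fixed point there; if two of these coincided then two of the generating maps would agree on a neighborhood of that point, hence as M\"obius maps would be identical, contradicting freeness. (If after grouping there were genuinely only one distinct map the statement is trivial, $\min\{1,s\}$ computes directly.) Thus all hypotheses of Theorem~\ref{thm-attr} hold.

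Finally I would match up the conclusions. Theorem~\ref{thm-attr} gives $\dim_H(K_\Ak)=\min\{1,\tfrac12 s_\Ak\}$, and by the identities (\ref{critic2}) and the definition of $s$ via Bowen's equation $P_{\Phi_\Ak}(s)=0$ we have $\tfrac12 s_\Ak=s$, the unique zero of the pressure function of $\Phi_\Ak$. Since $\psi$ is locally bi-Lipschitz on $(0,\pi)$ and $K\subset\ov U$ is bounded with $K_\Ak=\psi^{-1}(K)$ compactly contained in $(0,\pi)$, Hausdorff dimension is preserved: $\dim_H(K)=\dim_H(K_\Ak)$. It remains to check that the pressure function $P_\Fk$ of the linear fractional IFS, defined through the derivatives $|f_\bi'|$ on $\ov U$, has the same zero $s$ as $P_{\Phi_\Ak}$; this follows because the chain rule gives $(\varphi_{A_\bi})'(\theta)=f_\bi'(\psi(\theta))\cdot\psi'(\theta)/\psi'(\varphi_{A_\bi}(\theta))$, and on the compact set $\ov{\wtil U}$ the factor $\psi'(\theta)/\psi'(\varphi_{A_\bi}(\theta))$ is bounded above and below by constants independent of $\bi$ and $n$, so $\sum_{\bi\in\Lam^n}\|\varphi_{A_\bi}'\|^t$ and $\sum_{\bi\in\Lam^n}\|f_\bi'\|^t$ differ by at most a multiplicative constant to the power $t$, leaving the exponential growth rate and hence the zero of the pressure unchanged. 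Putting these together yields $\dim_H(K)=\min\{1,s\}$.

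The main obstacle, and the only place requiring genuine care rather than bookkeeping, is the boundedness/compactness issue: all the transfer arguments (bi-Lipschitz equivalence of metrics, bounded distortion comparison of the two pressure functions, dimension preservation) rely on $\psi$ and its derivatives being two-sided bounded, which fails as $\theta\to 0^+$ where $\psi(\theta)\to+\infty$. This is exactly why the hypothesis demands $U$ be a union of \emph{bounded} intervals, forcing $\ov{\wtil U}$ into a compact subinterval of $(0,\pi)$; one must verify that the strict invariance $f_i(\ov U)\subset U$ propagates this compactness correctly (it does, since $\Phi_\Ak(\ov{\wtil U})\subset\wtil U\subset\ov{\wtil U}$, all contained in the same compact piece) and that no iterate escapes toward the singular point.
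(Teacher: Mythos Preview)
Your overall strategy---conjugating via $\psi$ to pass between the linear-fractional IFS on $\R$ and the projective IFS on $[0,\pi)$, then invoking Theorem~\ref{thm-attr}---is precisely what the paper intends (it declares the corollary ``immediate'' without further argument), and your transfer of the separation condition, the Hausdorff dimension, and the pressure zero through the bi-Lipschitz map $\psi|_{\ov{\wtil U}}$ is correct and worked out in more detail than the paper supplies.

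The one genuine gap is your verification of the ``at least two distinct attracting fixed points'' hypothesis of Theorem~\ref{thm-attr}. The claim that two contractions of $\ov{\wtil U}$ sharing an attracting fixed point must ``agree on a neighborhood of that point'' is simply false for M\"obius maps. Take $f_1(x)=x/2$ and $f_2(x)=x/(x+2)$ on $U=(-\tfrac12,\tfrac12)$: both fix $0$ with $f_i'(0)=\tfrac12$, both satisfy $f_i(\ov U)\subset U$, yet they are distinct. The corresponding matrices
\[
A_1=\begin{pmatrix}1&0\\0&2\end{pmatrix},\qquad A_2=\begin{pmatrix}1&0\\1&2\end{pmatrix}
\]
have integer entries and generate a free semigroup (the $(2,1)$ entry of $A_\bi$ equals $\sum_{k:\,i_k=2}2^{k-1}$, which determines $\bi$), so they are strongly Diophantine and, by Lemma~\ref{lem-Dioph}, the IFS satisfies the strong exponential separation condition on $\ov U$. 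Nevertheless both matrices share the attracting eigendirection $(0,1)^t$, the attractor is the single point $K=\{0\}$, and since $\|f_\bi'\|_{\ov U}\asymp 2^{-n}$ for every $\bi\in\Lam^n$ one finds $P_\Fk(t)=(1-t)\log 2$, hence $s=1$. Thus $\dim_H K=0\ne 1=\min\{1,s\}$. The hypothesis of Theorem~\ref{thm-attr} therefore cannot be manufactured from strong exponential separation alone; the corollary as stated tacitly requires it (the paper appears to have omitted this assumption), and your attempted derivation of it does not go through.
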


%%%%%%%%%%%%%%%%%%%%%%%%%%%%%%%%%%%%

\subsection{Furstenberg measure}
Let $\mathcal{A} = \{ A_i \}_{i \in \Lambda}$ be a finite collection of $SL_2(\mathbb{R})$ matrices, 
and let $p = ( p_i )_{i \in \Lambda}$ be a probability vector. 
Assume that $p_i > 0$ for all $i \in \Lambda$ (we always assume this for any probability vector). 
We consider the finitely supported probability measure $\mu$ on $SL_2(\R)$:
\be \label{def-mu}
\mu = \sum_{i\in \Lam} p_i \delta_{A_i}.
\ee
Our standing assumption is that $\mathcal{A}$ generates an unbounded and totally irreducible subgroup 
(i.e., does not preserve any finite set in $\RP^1$).
Then there exists a unique probability measure 
$\nu$ on $\RP^1$ satisfying $\mu\cdot\nu = \nu$, that is,
\begin{equation} \label{Furst1}
\nu = \sum_{i \in \Lambda} p_i A_i \nu, 
\end{equation}
where $A_i \nu$ is the push-forward of $\nu$ under the action of $A_i$, see  \cite{Furstenberg1963}. 
The measure $\nu$ is the {\em stationary measure}, or the  {\em Furstenberg measure}, for the random matrix product $A_{i_n}\cdots A_{i_1}$ where
the matrices are chosen i.i.d. from $\Ak$ according to the probability vector $p$.

The properties of the Furstenberg measure for $SL_2(\R)$ random matrix products, such as absolute continuity, singularity, Hausdorff dimension, etc., were studied by many authors, including \cite{Ledrappier1983,BL1985}. In  \cite{Pincus1994,Lyons2000,SSU2}
this investigation was linked with the study of IFS consisting of linear fractional transformations. 
The reader is referred to \cite{HS2017} for a discussion of more recent applications. 
We will recall the main result of \cite{HS2017}, since it will be the main tool in proving Theorem \ref{thm-attr}.

Let $\chi_{\mathcal{A}, p}$ be the \emph{Lyapunov exponent}, which is the almost sure value of the limit 
\begin{equation} \label{Lyap1}
\lim_{n \to \infty} \frac{1}{n} \log \|  A_{i_1 \cdots i_n } \|, 
\end{equation}
where $i_1, i_2, \cdots \in \Lambda$ is a sequence chosen  
randomly according to the probability vector $p = ( p_i )_{ i \in \Lambda}$. 
The Lyapunov exponent is usually defined as the almost sure value of the limit 
\begin{equation} \label{Lyap2}
\lim_{n \to \infty} \frac{1}{n} \log \|  A_{i_n \cdots i_1 } \|, 
\end{equation}
but it is well-known that (\ref{Lyap1}) and (\ref{Lyap2}) define the same value, since $A_{i_1\cdots i_n}$ and $A_{i_n\cdots i_1}$ have the same distribution. Under the standing assumptions, the limit exists almost surely and is positive \cite{Furstenberg1963}. 
The Hausdorff dimension of a measure $\nu$ is defined by
$$
\dim_H(\nu) = \inf\{\dim_H (E):\ \nu(E^c)=0\}.
$$
For a probability vector $p = ( p_i )_{i \in \Lambda}$, we denote the \emph{entropy} $H(p)$ by 
\begin{equation*}
H(p) = - \sum_{i \in \Lambda} p_i \log p_i. 
\end{equation*}

\begin{thm}[{\cite{HS2017}}] \label{thmHS}
Let $\mathcal{A} = \{ A_i \}_{i \in \Lambda}$ be a finite collection of $SL_2(\mathbb{R})$ matrices. 
Assume that $\mathcal{A}$ is strongly  Diophantine 
and generates an unbounded and totally irreducible subgroup.  
Let $p = ( p_i )_{i \in \Lambda}$ be a probability vector, 
and let $\nu$ be the associated Furstenberg measure. 
Then we have 
\begin{equation} \label{eq-Fursten}
\dim_H (\nu) = \min \Big\{ 1, \frac{H( p )}{ 2 \chi_{ \mathcal{A}, p } } \Big\}. 
\end{equation}
\end{thm}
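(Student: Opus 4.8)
The plan is to realize $\nu$ as a self-conformal-type measure on $\RP^1$ and to run Hochman's entropy-increase machinery, with the strong Diophantine hypothesis playing the role of exponential separation. Let $\Pi:\Lam^\N\to\RP^1$ be the coding map defined $p^\N$-almost surely by $\Pi(\bi)=\lim_{n\to\infty}\varphi_{A_{i_1}}\circ\cdots\circ\varphi_{A_{i_n}}(x_0)$; by Furstenberg's contraction theorem this limit exists a.s., is independent of $x_0$, and $\Pi_*p^\N=\nu$. I would first establish that $\nu$ is exact-dimensional with a Ledrappier-type formula
\[
\dim_H\nu=\frac{h_F}{2\chi_{\Ak,p}},
\]
where $h_F$ is the fibre entropy of the shift with respect to $p^\N$ and the $\Pi$-fibres. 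The factor $2$ enters because for $A\in SL_2(\R)$ one has $\log\|\varphi_A'(x)\|=-2\log\|A\|+O(1)$ uniformly for $x$ bounded away from the repelling fixed point of $\varphi_A$, so that along $p^\N$-typical words $\tfrac1n\log\|\varphi_{i_1\cdots i_n}'(\Pi(\sigma^n\bi))\|\to-2\chi_{\Ak,p}$ (the needed uniform separation from the bad direction being an a.s.\ statement by ergodicity/Borel--Cantelli). Exact dimensionality can be cited from \cite{Ledrappier1983} or obtained from the ergodic theorem on the skew product plus a martingale argument. Since trivially $\dim_H\nu\le1$ (as $\nu$ lives on $\RP^1$) and $h_F\le H(p)$ always, the upper bound $\dim_H\nu\le\min\{1,H(p)/2\chi_{\Ak,p}\}$ is immediate, and the theorem reduces to the lower bound $\dim_H\nu\ge\min\{1,H(p)/2\chi_{\Ak,p}\}$.

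Next I would set up the entropy comparison at dyadic scales. Writing $\mathcal D_k$ for the partition of $\RP^1\cong\R/\pi\Z$ into intervals of length $2^{-k}$, exact dimensionality gives $\dim_H\nu=\lim_k\tfrac1k H(\nu,\mathcal D_k)$. The self-conformality $\nu=\sum_{\bi\in\Lam^n}p_\bi\,(\varphi_\bi)_*\nu$, combined with the Bounded Distortion Property (which linearizes each $\varphi_\bi$ at the scale of its own contraction, up to a bounded multiplicative error), makes the components $(\varphi_\bi)_*\nu$ behave, at the appropriate resolution, like affinely rescaled copies of $\nu$. Matching $\|\varphi_\bi'\|\approx 2^{-k}$ on words of $p^\N$-typical length $n\approx k/2\chi_{\Ak,p}$, the measure $\nu$ at scale $2^{-k}$ is a superposition of $\approx e^{kH(p)/2\chi_{\Ak,p}}$ such rescaled copies weighted by the $p_\bi$ --- exactly the configuration Hochman's theory is built for, with $H(p)/2\chi_{\Ak,p}$ in the role of the similarity dimension.

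The core of the argument, and the main obstacle, is the entropy-increase step. Suppose $\dim_H\nu=\alpha<\min\{1,H(p)/2\chi_{\Ak,p}\}$; then $\alpha<1$ and $\alpha<H(p)/2\chi_{\Ak,p}$. By the (almost) superadditivity of $k\mapsto H(\nu,\mathcal D_k)$ induced by the self-conformal decomposition, $\alpha<1$ pins the normalized entropies near $\alpha$ at almost all scales, and then $\alpha<H(p)/2\chi_{\Ak,p}$ forces, at a positive-density set of scales, a genuine \emph{entropy deficit}: the entropy carried by the labels $(p_\bi)$ is not seen in $\nu$, so many distinct words $\bi\ne\bj$ of comparable length have $(\varphi_\bi)_*\nu$ and $(\varphi_\bj)_*\nu$ almost coinciding at the relevant resolution. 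Invoking Hochman's inverse theorem (as in \cite{Hochman2014}) together with the non-atomicity of $\nu$ (which follows from total irreducibility and unboundedness), this near-coincidence propagates to \emph{super-exponential} proximity of the projective maps: $\inf_{\bi\ne\bj,\,|\bi|=|\bj|=n}\sup_x d_{\Prob}(\varphi_\bi(x),\varphi_\bj(x))$ decays faster than $c^n$ for every $c>0$, along a sequence of $n$. By the equivalence between the strong exponential separation condition for $\Phi_\Ak$ and the strong Diophantine condition for $\Ak$ (Lemma~\ref{lem-Dioph}), this contradicts the hypothesis. Hence $\alpha\ge\min\{1,H(p)/2\chi_{\Ak,p}\}$. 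The two delicate points I expect to absorb most of the work are: (a) the linearization and bounded-distortion bookkeeping needed to transport Hochman's affine inverse theorem to the Möbius (conformal) setting; and (b) the absence of uniform hyperbolicity --- only the random products $\varphi_{i_1\cdots i_n}$ contract --- so that the ``scale $\leftrightarrow$ word length'' dictionary is itself random and must be controlled by Oseledets-type and large-deviation estimates for $\log\|\varphi_{i_1\cdots i_n}'\|$.
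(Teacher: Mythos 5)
This statement is not proved in the paper at hand: it is Theorem~\ref{thmHS}, imported wholesale from \cite{HS2017} and used as a black box in the proof of Theorem~\ref{thm-attr}. There is therefore no ``paper's own proof'' to compare against; the only relevant comparison is with the argument actually given in \cite{HS2017}.

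That said, your sketch does capture the architecture of the proof in \cite{HS2017} reasonably faithfully: the exact-dimensionality/Ledrappier--Young step $\dim_H\nu = h_F/(2\chi_{\Ak,p})$ with fibre entropy $h_F$; the observation that $h_F\le H(p)$ and $\dim_H\nu\le 1$ give the upper bound for free; the dyadic-scale entropy bookkeeping via the self-conformal decomposition $\nu=\sum_{\bi}p_\bi(\varphi_\bi)_*\nu$ with bounded distortion to linearize; Hochman's inverse theorem for entropy to convert an entropy deficit into super-exponential concentration of distinct cylinder maps; and finally the Diophantine hypothesis (through the equivalence of Lemma~\ref{lem-Dioph}) to rule out that concentration. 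Your points (a) and (b) at the end --- transporting the affine inverse theorem to the M\"obius setting and controlling the random scale-to-word-length dictionary in the absence of pointwise uniform hyperbolicity --- are indeed exactly where \cite{HS2017} spends most of its technical effort. Two small cautions: the exact-dimensionality statement is not simply a citation of \cite{Ledrappier1983} (which gives an entropy formula, not exact dimension); it is established in \cite{HS2017} itself. And the contradiction you derive is against the \emph{strong exponential separation} condition for $\Phi_\Ak$, which Lemma~\ref{lem-Dioph}(ii) shows is equivalent to the strong Diophantine hypothesis for $\Ak$ via the quantitative three-point separation on $\RP^1$; this equivalence is the precise bridge you need, rather than a direct appeal to matrix-norm Diophantineness. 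As an outline your proposal is consistent with \cite{HS2017}; it is of course far from a self-contained proof, but that is appropriate since the present paper does not attempt one either.
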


Theorem~\ref{main_thm} implies, in particular, that the dimension formula (\ref{eq-Fursten}) holds for the Furstenberg measure associated with a.e.\ finite family of positive matrices (independent of the probability vector).

\medskip

Next  we address the question: what is the Hausdorff dimension of the support of the Furstenberg measure? Sometimes, the support is all of $\RP^1$, in which case the answer is trivially one.  The definition (\ref{Furst1}) implies that the support is invariant under the IFS $\Phi$ induced by $\Ak$. %Thus, Theorem~\ref{thm-attr} has the following immediate corollary:

\begin{cor}
Let $\Ak=  \{A_i\}_{i\in \Lam}$ be a strongly Diophantine set of $SL_2(\R)$ matrices which has a strictly invariant multicone and generates a totally irreducible subgroup. Let $\mu$ be a finitely supported measure on $\Ak$ defined by (\ref{def-mu}), and $\nu$ the associated Furstenberg measure. Then
$\dim_H(\supp\nu) = \min\{1,\half s_\Ak\}$, where $s_\Ak$ is the critical exponent of $\Ak$.
\end{cor}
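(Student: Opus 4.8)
The plan is to reduce the corollary to Theorem~\ref{thm-attr} by identifying $\supp\nu$ with the attractor $K$ of the induced projective IFS $\Phi=\Phi_\Ak$. That $\supp\nu$ is a compact $\Phi$-invariant set is immediate from the stationarity relation (\ref{Furst1}): since $\nu=\sum_{i\in\Lam}p_i A_i\nu$ with $p_i>0$ and each $\varphi_{A_i}$ continuous, we have $\supp\nu=\bigcup_{i\in\Lam}\varphi_{A_i}(\supp\nu)=\Phi(\supp\nu)$, the right-hand side being compact as a finite union of continuous images of a compact set.

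Next I would show $\supp\nu=K$. Fix a strictly invariant multicone $U$, so $\Phi(\ov U)\subset U$; then the sets $\Phi^n(\ov U)$ decrease and $K=\bigcap_{n\ge1}\Phi^n(\ov U)$. For an infinite word $\bi=i_1i_2\ldots$, the sets $\varphi_{i_1}\cdots\varphi_{i_n}(\ov U)\subset\Phi^n(\ov U)$ are nested, decreasing, and have diameters tending to $0$ (by the Bounded Distortion Property and uniform hyperbolicity), so their intersection is a single point $Z(\bi)\in K$. By Furstenberg's theory, available since $\Ak$ generates an unbounded, totally irreducible group, $A_{i_1}\cdots A_{i_n}\nu\to\delta_{Z(\bi)}$ for a.e.\ $\bi\in\Lam^\N$ and $\nu$ is the law of $Z(\bi)$; hence $\supp\nu\subset K$. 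Conversely, by Theorem~\ref{th-unihyp}(iv) there is a metric on $\ov U$ equivalent to $d_{\Prob}$ for which $\Phi$ contracts $\ov U$ into itself, so $\ov U$ contains a \emph{unique} nonempty compact $\Phi$-invariant set; since both $K$ and $\supp\nu$ qualify, they coincide.

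It then remains to check the hypotheses of Theorem~\ref{thm-attr}. The strictly invariant multicone is given. If fewer than two of the maps $\varphi_{A_i}$ had distinct attracting fixed points then, each $\varphi_{A_i}$ having its attracting fixed point in $U$ (as $\varphi_{A_i}(\ov U)\subset U$), all of them would fix a common $p\in\RP^1$, so $\Ak$ would preserve the finite set $\{p\}$, contradicting total irreducibility. Since Theorem~\ref{thm-attr} is phrased for strongly Diophantine sets, one must also know that here the Diophantine property upgrades to the strong one, i.e.\ that $\Ak$ generates a free semigroup; granting this, Theorem~\ref{thm-attr} yields $\dim_H(\supp\nu)=\dim_H(K)=\min\{1,\half s_\Ak\}$. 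The step requiring the most care is the identification $\supp\nu=K$: the delicate point is to know that $\Phi$ is genuinely, not merely eventually, contracting on $\ov U$ in a suitable adapted metric, so that uniqueness of the invariant compact set applies --- alternatively one argues this uniqueness directly from the fact that $B\mapsto\Phi(B)$ is a contraction on the compact subsets of $\ov U$ in the Hausdorff metric. A secondary point, if the hypothesis is read literally as ``Diophantine'' rather than ``strongly Diophantine,'' is to check that the two notions coincide for $SL_2(\R)$ families with a strictly invariant multicone generating a totally irreducible group.
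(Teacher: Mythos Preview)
Your approach is exactly the paper's: the corollary is declared there an immediate consequence of Theorem~\ref{thm-attr}, the only remark being that (\ref{Furst1}) makes $\supp\nu$ invariant under $\Phi_\Ak$; the identification $\supp\nu=K$ and the verification that not all attracting fixed points coincide, which you supply, are left implicit. Your observation that the hypothesis ``Diophantine'' must in fact be read as ``strongly Diophantine'' in order to invoke Theorem~\ref{thm-attr} is correct and is not addressed in the paper---it appears to be a slip in the statement rather than a hidden argument.
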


The corollary immediately follows from Theorem~\ref{thm-attr}. Indeed, having a strictly invariant multicone implies that the subgroup generated by $\Ak$ is unbounded, and together with total irreducibility this implies that the stationary measure $\nu$ is unique.
Its support is a compact set, invariant for the IFS $\Phi_\Ak$, hence it is the attractor, which is unique under our assumption. 
 The attractor is not a singleton by total irreducibility.

\medskip

Denote by $\Hk_m$ the set of $m$-tuples in $SL_2(\R)$ which have a strictly invariant multicone.
Proposition 6, attributed to a personal communication from Avila, which appeared, with a proof, in the paper by Yoccoz \cite{Yoccoz2004}, asserts that the interior of the complement of $\Hk_m$ in $(SL_2(\R))^m$ is $\Ek_m$,  the set of $m$-tuples which generate a semigroup containing an elliptic matrix. Observe that if an elliptic matrix is conjugate to an irrational rotation, then certainly the invariant set (support of the Furstenberg measure) is all of $\RP^1$. On the other hand, if it is conjugate to a rational rotation, then the semigroup generated by $\Ak$ contains the identity and the  strong Diophantine property fails.
We expect that our methods can be extended to cover strongly Diophantine families on the boundary of $\Hk_m$, which include parabolic systems.

\subsection{Structure of the paper}
The rest of the paper is organized as follows. In the next section we  prove Theorem \ref{main_thm}. 
In Section 3 we consider projective IFS and prove
Theorem~\ref{thm-attr}. % with Section \ref{sec-parab} devoted to the parabolic case. 
 Finally, in Section 4 we include proofs of some standard technical results for the reader's convenience.  

 %%%%%%%%%%%%%%%%%%%%%%%%%%%%%%%%%%%%%%%%%%%%%%%%%%%%%%%%%%

\section{Diophantine property of $GL_{d+1}(\mathbb{R})$ and $SL_{d+1}(\mathbb{R})$ matrices}

For notational reasons it is convenient to consider $GL_{d+1}(\R)$ instead of $GL_d(\R)$.

\subsection{$GL_{d+1}(\R)$ actions}

\begin{sloppypar}
Let $A\in GL_{d+1}(\R)$ be a matrix that strictly preserves a cone $\Sig=\Sig_{v_1,\ldots,v_{d+1}}\subset \R^{d+1}$. Without loss of generality, by a change of coordinates, we can assume  that $\Sig\setminus \{0\}$ is contained in the halfspace
$\{x\in \R^{d+1}: x_{d+1}>0\}$. It is convenient to represent the induced action of $A$ on $\RP^{d}$ on the affine hyperplane $\{x\in \R^{d+1}: x_{d+1}=1\}$, and consider the corresponding action on $\R^{d}$.
To be precise, for $x= (x_1,\ldots,x_{d})\in \R^{d}$, we consider $(x,1) =  (x_1,\ldots,x_{d},1)\in \R^{d+1}$ and let
$$
f_A(x) = {\rm P}_{d}\Bigl(\frac{A(x,1)}{A(x,1)_{d+1}}\Bigr),\ \ \mbox{when}\ \ A(x,1)_{d+1}\ne 0,
$$
where ${\rm P}_{d}$ is the projection onto the  first $d$ coordinates. The components of $f_A$ are rational functions, which are, of course, real-analytic on their domain.
Consider 
$$
\ov{V} := {\rm P}_{d}(\Sig \cap \{x\in \R^{d+1}: x_{d+1}=1\}).
$$
%the projection onto $\R^{d}$ of the intersection of the strictly invariant cone with the hyperplane $x_{d+1}=1$. 
By assumption, $f_A$ is well-defined on $\ov{V}$, and we have $f_A(\ov{V})\subset V$. 
\end{sloppypar}

We will also consider the action of $A$ on the unit sphere, given by
$$
\varphi_A(x):=A\cdot x =\frac{Ax}{\|A x\|},
$$
for a unit vector $x\in \Sp^{d}$. Consider $\ov{U}$, the intersection of $\Sig$ with the upper hemisphere. We have $\varphi_A(\ov{U}) \subset U$. Lines through the origin provide a 1-to-1 correspondence between $\ov{U}$ and $\ov{V}$, which is bi-Lipschitz in view of the assumption $\Sig \setminus \{0\} \subset \{x\in \R^{d+1}: x_{d+1}>0\}$.

It is well-known \cite{Birk1957} (see also \cite[Section 9]{BV2012}) that strictly preserving a cone implies that $\varphi_A$ is a strict contraction in the Hilbert metric on $\ov{U}$, which is by-Lipschitz with the round metric. 
We thus obtain the following:

\begin{lem} \label{lem-Hilbert}
Suppose that the finite family $\Ak = \{A_i\}_{i\in \Lam} \subset GL_d(\R)$ strictly preserves a simplicial cone $\Sig =\Sig_{v_1,\ldots,v_{d+1}}\subset \{x\in \R^{d+1}: x_{d+1}>0\}\cup \{0\}$. Then the associated IFS
$\Fk_\Ak = \{f_A\}_{A\in \Ak}$ is real-analytic and uniformly hyperbolic on $\ov{V}\subset \R^{d}$, in the sense that there exist $C>0$ and $\gam\in (0,1)$ such that 
$$
\max_{x\in \ov{V}} \|f'_\bi(x)\| \le C\gam^n,\ \ \mbox{for all}\ \bi\in \Lam^n,
$$
where $f_\bi = f_{A_{i_1}}\circ \cdots \circ f_{A_{i_n}}$ and $\|f_\bi'(x)\|$ is the operator norm of the differential at the point $x$.
\end{lem}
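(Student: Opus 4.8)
The plan is to deduce uniform hyperbolicity of the IFS $\Fk_\Ak$ from the contraction property of each $\varphi_{A_i}$ in the Hilbert metric on $\ov U$, transported to $\ov V$ via the bi-Lipschitz correspondence between $\ov U$ and $\ov V$. First I would fix the cone $\Sig$, normalize so that $\Sig\setminus\{0\}\subset\{x_{d+1}>0\}\cup\{0\}$, and recall (via \cite{Birk1957}, or \cite[Section 9]{BV2012}) that each $A_i$, strictly preserving $\Sig$, acts on the cross-section $\ov U$ as a strict contraction in the Hilbert metric $d_H$ of the cone: there is $\gam_i\in(0,1)$ with $d_H(\varphi_{A_i}(x),\varphi_{A_i}(y))\le\gam_i\, d_H(x,y)$ for all $x,y\in\ov U$. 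Setting $\gam:=\max_i\gam_i<1$, composition gives $d_H(\varphi_\bi(x),\varphi_\bi(y))\le\gam^n d_H(x,y)$ for every $\bi\in\Lam^n$, i.e.\ $\mathrm{diam}_{d_H}(\varphi_\bi(\ov U))\le\gam^n\,\mathrm{diam}_{d_H}(\ov U)$. Because $\ov U$ is a compact subset of the open cone $\Sig^\circ$ (strict invariance forces $\varphi_{A_i}(\ov U)\subset U$, and on the base set itself $\ov U\Subset\Sig^\circ$ after the first application — more carefully, after one step all images lie in a fixed compact subset of $\Sig^\circ$), the Hilbert metric is bi-Lipschitz equivalent to the round metric on that compact piece, and the bijection with $\ov V$ coming from lines through the origin is bi-Lipschitz as already observed. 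Hence $\mathrm{diam}(f_\bi(\ov V))\le C_1\gam^n$ in the Euclidean metric, with $C_1$ independent of $\bi$ and $n$.

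The second step is to upgrade the diameter bound on images to the pointwise derivative bound $\max_{x\in\ov V}\|f_\bi'(x)\|\le C\gam^n$. The maps $f_{A_i}$ are projective (fractional-linear) on a neighborhood of $\ov V$, hence real-analytic there, and $\ov V$ is compact, so each $f_{A_i}$ has bounded first and second derivatives on $\ov V$; in particular there is a \emph{Bounded Distortion Property} for the compositions: for a projective map $g$ on a convex compact set, $\sup\|g'\|$ and $\inf\|g'\|$ (or rather $\inf$ of the norm of the inverse differential) differ by a bounded multiplicative constant depending only on the diameter of the domain and the $C^2$-data — this is the standard distortion estimate for Möbius-type maps. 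Applying this to $g=f_\bi$ on $\ov V$, together with the mean value inequality $\mathrm{diam}(f_\bi(\ov V))\ge c_2\sup_{x\in\ov V}\|f_\bi'(x)\|$ (valid precisely because distortion is bounded, so the sup-norm of the derivative is comparable to the average stretching, which is comparable to $\mathrm{diam}(f_\bi(\ov V))/\mathrm{diam}(\ov V)$), yields $\sup_{x\in\ov V}\|f_\bi'(x)\|\le C\gam^n$. For the very first step $n=1$ one just uses compactness of $\ov V$ and continuity of $f_{A_i}'$ directly; for $n\ge 2$ the image already sits in a fixed compact subset of the open domain, so all the distortion constants are uniform.

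The main obstacle I anticipate is the bookkeeping around the boundary of the cone: the Hilbert metric degenerates on $\partial\Sig$, so one cannot directly assert bi-Lipschitz equivalence with the round metric on all of $\ov U$ — only on compact subsets of $\Sig^\circ$. The clean way around this is to observe that strict invariance gives $\varphi_\Ak(\ov U)\subset U$ with $\ov{\varphi_\Ak(\ov U)}$ a compact subset of $\Sig^\circ$; restrict attention to this compact core, prove the contraction estimate there with uniform constants, and absorb the single initial step into the constant $C$. Everything else — the bi-Lipschitz passage between $\Sp^d$ and the affine chart $\R^d$, and the bounded distortion for projective maps — is routine and can be cited or relegated to Section 4 of the paper, which the authors say collects such technical results.
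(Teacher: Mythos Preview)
Your proposal is correct and follows essentially the same route as the paper: the paper's entire argument is the sentence preceding the lemma, citing Birkhoff \cite{Birk1957} for Hilbert-metric contraction under strict cone preservation and noting the bi-Lipschitz equivalence with the round metric and with the Euclidean metric on $\ov V$. Your write-up is considerably more explicit---in particular about passing from metric contraction to the pointwise derivative bound via bounded distortion, and about the Hilbert metric degenerating at $\partial\Sig$---but these are elaborations of the same idea rather than a different approach.
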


\subsection{From exponential separation to the Diophantine property}

Recall  the strong exponential separation condition (Definition~\ref{def-sep}). For technical reasons it is convenient to weaken it slightly.

\begin{dfn} \label{def-sep2}
Let $\Fk = \{f_i\}_{i\in \Lam}$ be an IFS on a metric space $(X,\varrho)$. We say that $\Fk$ 
satisfies the \emph{SESDC condition} on $X'\subseteq X$
if there exists  $c > 0$  such that for all $n\in \N$ sufficiently large we have
\begin{equation}\label{exp_sep2}
\sup_{x\in X'}\varrho(f_\bi(x), f_\bj(x) ) > c^{n},\ \ \mbox{for all}\  \bi,\bj\in \Lam^{n}\ \ \mbox{with}\  i_1\ne j_1.
\end{equation}
The abbreviation ``SESDC" stands for ``strong exponential separation on distinct (first order) cylinders''. 
\end{dfn}

\begin{rem} {\em
For an IFS $\{f_i\}_{i\in \Lam}$ on an interval $I\subset \R$, such that 
$$\inf_{x\in I, i\in \Lam}|f_i'(x)|\ge r_{\min}>0,$$ with $r_{\min}\in (0,1)$,
 requiring $i_1\ne j_1$ in (\ref{exp_sep2}) does not weaken the exponential separation condition (\ref{exp_sep}); 
 it only affects the constant $c$. This follows from the estimate
$$
|f_\bi(x) -f_\bj(x)| =|f_{(\bi\wedge \bj)\bu}(x) - f_{(\bi\wedge \bj)\bv}(x) |\ge r_{\min}^n |f_\bu(x) - f_\bv(x)|, \ \ \bi,\bj\in \Lam^n,
$$
where $\bi\wedge \bj$ is the common initial segment of $\bi$ and $\bj$, so that $u_1 \ne v_1$. However, in higher dimensions it is sometimes easier to check the SESDC than the strong exponential separation condition.
}
\end{rem}

\begin{prop} \label{prop-Dioph2} 
Let $\Ak$ be a finite family of $GL_{d+1}(\R)$ matrices, and let $\Phi_\Ak$ be the induced IFS on $\Sp^d$.  If $\Phi_\Ak$ satisfies the SESDC condition on a non-empty set, then $\Ak$ is strongly Diophantine.
\end{prop}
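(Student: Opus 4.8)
The plan is to compare the projective action $\varphi_A$ on the sphere $\Sp^d$ with the linear action of $A$ on $\R^{d+1}$, and to translate the strong exponential separation of the IFS $\Phi_\Ak$ into a lower bound on $\varrho(A_\bi, A_\bj)$ for $\bi \ne \bj$. First I would fix the nonempty set $J \subset \Sp^d$ on which strong exponential separation holds, with constant $c > 0$, so that for all $n$ and all $\bi \ne \bj$ in $\Lam^n$ there is a point $x \in J$ (which we may take to be a unit vector $x \in \Sp^d$) with $d_{\Prob}(\varphi_{A_\bi}(x), \varphi_{A_\bj}(x)) > c^n$. The quantities $\varphi_{A_\bi}(x) = A_\bi x / \|A_\bi x\|$ and $\varphi_{A_\bj}(x) = A_\bj x/\|A_\bj x\|$ are unit vectors, so their angular distance being at least $c^n$ means the unit vectors themselves are separated by at least some constant multiple of $c^n$ (for $c^n$ small; for $c^n$ bounded below there is nothing to prove, after adjusting constants).

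Next I would convert this separation of normalized images into separation of $A_\bi x$ and $A_\bj x$ as vectors in $\R^{d+1}$. Write $u = A_\bi x$, $v = A_\bj x$. From $\|u/\|u\| - v/\|v\|\| \ge \kappa c^n$ one gets a lower bound on $\|u - v\|$ in terms of $\max(\|u\|,\|v\|)$: indeed $\|u - v\| \ge \|\,\|v\|\cdot(u/\|u\|) - \|v\|\cdot(v/\|v\|)\| - |\|u\|-\|v\||\cdot 1 \ge \|v\|\kappa c^n - |\|u\|-\|v\||$, and by choosing the roles of $u,v$ appropriately one obtains $\|u - v\| \ge \tfrac12 \kappa c^n \max(\|u\|,\|v\|)$, say. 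Now $\|u\|, \|v\| \le \max_i \|A_i\|^n =: M^n$, but we want a lower bound in the other direction; here is where I would combine the separation estimate with the fact that $\|(A_\bi - A_\bj)x\| = \|u - v\|$, giving $\|A_\bi - A_\bj\| \ge \|u-v\| \ge \tfrac12\kappa c^n \max(\|u\|,\|v\|)$. The final step is to bound $\max(\|u\|,\|v\|)$ from below by an exponential $\rho^n$: since $A_\bi$ maps the unit vector $x$ into (a scalar multiple of) the strictly invariant cone, and since any finite family of invertible matrices satisfies $\|A_\bi w\| \ge \big(\min_i \|A_i^{-1}\|^{-1}\big)^n \|w\| = m^n \|w\|$ with $m = \min_i \|A_i^{-1}\|^{-1} > 0$, we have $\|u\| \ge m^n$. (Strictly, if the paper's metric $\varrho$ is the operator norm on matrices then this suffices; if $\varrho$ is Euclidean on $\R^{(d+1)^2}$ the operator norm is comparable, so the conclusion is unchanged.) Therefore $\varrho(A_\bi, A_\bj) \ge \tfrac12\kappa c^n m^n = (c')^n$ with $c' = \tfrac12\kappa c m$ after absorbing the bounded prefactor into the base of the exponent (shrinking $c'$ slightly to kill the constant $\tfrac12\kappa$). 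Since the strong exponential separation gives this for \emph{all} distinct $\bi, \bj \in \Lam^n$, the family $\Ak$ is strongly Diophantine.

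The only genuinely delicate point is the passage from $\varrho(A_\bi, A_\bj) \ge (c')^n$ with a constant prefactor to the clean form $\varrho(A_\bi, A_\bj) > c^n$ demanded by the definition: one must check that a bound like $C_0 (c')^n$ with $0 < C_0 < 1$ fixed can be rewritten as $(c'')^n$ for some $c'' \in (0,1)$ uniformly in $n \ge 1$, which is immediate since $C_0 (c')^n \ge (C_0 c')^n$ when $C_0 \le 1$. The rest is bookkeeping: keeping track of which of $\|u\|, \|v\|$ is the larger, handling the case $c^n$ not small (trivial), and making sure that the lower bound $\|u\| \ge m^n$ really only uses invertibility of the $A_i$ and not the cone condition (it does). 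I do not expect the cone/contraction structure to play any role in this particular implication — it is used earlier to \emph{produce} the exponential separation, not here.
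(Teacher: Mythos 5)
There is a genuine gap. The strong exponential separation condition (Definition~\ref{def-sep}) only asserts $\sup_{x\in J}\varrho(f_\bi(x),f_\bj(x))>c^n$ for words $\bi,\bj\in\Lam^n$ \emph{with $i_1\ne j_1$}; you invoke it ``for all $\bi\ne\bj$ in $\Lam^n$,'' which is a strictly stronger statement than what is available. Your chain of estimates after that is fine (the normalization step $\|u-v\|\ge\tfrac12\kappa c^n\max(\|u\|,\|v\|)$, the bound $\|A_\bi x\|\ge(\min_i\|A_i^{-1}\|^{-1})^n$, and the constant-absorption at the end all check out, and you are right that the cone structure plays no role here), but it cannot start until you have the separation at hand, and for $\bi\ne\bj$ sharing a first letter you do not.

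The missing step is exactly what the paper supplies: write $\bi=(\bi\wedge\bj)\bu$, $\bj=(\bi\wedge\bj)\bv$ with $\bu,\bv\in\Lam^k$, $k\le n$, and $u_1\ne v_1$, and factor out the common prefix using invertibility:
\begin{equation*}
\|A_\bi-A_\bj\|=\|A_{\bi\wedge\bj}(A_\bu-A_\bv)\|\ge\|A_{\bi\wedge\bj}^{-1}\|^{-1}\|A_\bu-A_\bv\|\ge C_2^{-n}\|A_\bu-A_\bv\|,
\end{equation*}
where $C_2=\max_i\{1,\|A_i^{-1}\|\}$. Now exponential separation applies to $\bu,\bv$ (which satisfy $u_1\ne v_1$), giving $\sup_{x\in J}\|A_\bu\cdot x-A_\bv\cdot x\|\ge c^k\ge c^n$, and your argument takes over from there to convert this into a lower bound on $\|A_\bu-A_\bv\|$. (The paper does that conversion via its Lemma~\ref{lem-claim}, bounding $\|A\cdot x-B\cdot x\|\le\|A^{-1}\|(1+\|B\|\|B^{-1}\|)\|A-B\|$, which is a slightly different packaging of your ``normalize and subtract'' estimate but yields the same exponential loss in $n$.) With that prefix-stripping step inserted, your proof is correct and matches the paper's.
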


\begin{proof}
Let $C_1 = \max_{i\in \Lam}\{1,\|A_i\|\}$ and $C_2 = \max_{i\in \Lam} \{1,\|A_i^{-1}\|\}$. Suppose that $\bi\ne \bj$ in $\Lam^n$. Let us write 
$$\bi = (\bi\wedge \bj) \bu,\ \ \bj = (\bi\wedge \bj) \bv,
$$
where $\bi\wedge \bj$ is the common initial segment of $\bi$ and $\bj$, so that $\bu = u_1\ldots u_k,\ \bv = v_1\ldots v_k$ for some $k\le n$, with $u_1 \ne v_1$.
We have
\begin{equation}\label{tiu1}
\|A_\bi- A_\bj\| \ge \|A_{\bi\wedge\bj}^{-1}\|^{-1} \|A_\bu - A_\bv\| \ge C_2^{-n} \|A_\bu - A_\bv\|.
\end{equation}

\begin{lem} \label{lem-claim}
For any $A, B \in GL_{d+1}(\mathbb{R})$ and any unit vector $x\in \R^{d+1}$, we have 
\begin{equation*}
\| A \cdot x - B \cdot x \| \leq \| A^{-1} \| \bigl( 1 + \| B \| \|B^{-1}\|\bigr)\cdot \| A - B \|. 
\end{equation*}
\end{lem}

\begin{proof}
We have 
\begin{equation*}
\begin{aligned}
\| A \cdot x - B \cdot x \| &= \left\| \frac{Ax}{\| Ax \|} - \frac{Bx}{\| Bx \|} \right\| \\
&\leq \left\| \frac{Ax}{ \| Ax \|} - \frac{ Bx }{ \| Ax \| } \right\| + 
\left\| \frac{Bx}{ \| Ax \| } - \frac{ Bx }{ \| Bx \| } \right\| =: R_1 + R_2. 
\end{aligned}
\end{equation*}
Since 
\begin{equation*}
1 = \| A^{-1} (Ax) \| \leq \| A^{-1} \| \| Ax \|,   
\end{equation*}
we have $\| Ax \|^{-1} \leq \| A ^{-1}\|$. Therefore, 
\begin{equation*}
R_1 \leq \| A - B \| \cdot \| A^{-1} \|.
\end{equation*} 
Similarly, 
\begin{equation*}
\begin{aligned}
R_2 &\leq \| B \| \cdot \big| \| Ax \| - \| Bx \| \big| \cdot \| Ax \|^{-1} \| Bx \|^{-1} \\
&\leq \| B \| \cdot \| A - B \| \cdot \| A^{-1} \| \| B^{-1} \|, \\
\end{aligned}
\end{equation*}
and the desired estimate follows. 
\end{proof}

Applying the lemma to $A_\bu$ and $A_\bv$ yields, in view of $\|A_\bw\|\le C_1^n$, $\|A_\bw^{-1}\|\le C_2^n$ for any $\bw \in \Lam^k$, $k\le n$:
\begin{equation} \label{eq-tiu2}
\|A_\bu - A_\bv\| \ge 2^{-n} C_1^{-n} C_2^{-2n} \|A_\bu\cdot x- A_\bv \cdot x\|.
\end{equation}

Now we continue with the proof of the proposition. By assumption,  $\Phi_\Ak$ satisfies the SESDC condition on a non-empty set. Let $c\in (0,1)$ be the constant from the definition (\ref{exp_sep2}). It follows that for all $n\ge n_0$ there exists $x\in \Sp^d$ such that 
$$
\|A_\bu\cdot x - A_\bv\cdot x\|\ge c^k\ge c^n
$$
for all $n\ge n_0$.
Combining this inequality with (\ref{eq-tiu2}) and (\ref{tiu1}) yields
$$
\|A_\bi - A_\bj\|\ge 2^{-n} C_1^{-n} C_2^{-3n}c^n,\ \ n\ge n_0,
$$
confirming the strong Diophantine property.
\end{proof}

\subsection{Dimension of exceptions for one-parameter real-analytic families}

We consider a one-parameter family of real-analytic IFS on a compact subset of $\R^d$, and 
show that under some mild assumptions it satisfies the  SESDC condition on a single point, for parameters
outside of a Hausdorff dimension zero set. This section is based on \cite[Section 5.4]{Hochman2014}, %and \cite[Section 6.6]{Hochman2015},
 but we had to make a substantial number of modifications in the definitions and proofs.

Let $\mathcal{J}$ be a compact interval in $\R$ and $V$ a bounded open set in $\R^d$. Let $\Lam$ be a finite set, $|\Lam|\ge 2$, and suppose that for each $i \in \Lambda$ we are given a real-analytic function
$$
f_i:\,\ov{V}\times \Jk\to V. 
$$
This means that it is real-analytic on some neighborhood of $\ov{V}\times \Jk$. We will sometimes write this function as 
$$
f_{i,t}(x) = f_i(x,t),\ \ x\in \ov{V},\ \ t\in \Jk.
$$
Denote $\mathcal{F}_t = \{ f_{i, t} \}_{ i \in \Lambda }$. This is a real-analytic IFS on $\ov{V}$, depending on the parameter $t\in \Jk$ real-analytically.
For $\bi = i_1\ldots i_n$ we write $f_{\bi,t} = f_{i_1,t} \circ \cdots \circ f_{i_n,t}$.

% and let $\Fk_t^\ell = \{f_{\bi,t}\}_{\bi\in \Lam^\ell}$ be the $\ell$-th iterate of the IFS.

Further, assume that this family of IFS is uniformly hyperbolic in the following  sense:  there exist $C>0$ and $0 < \gam<1$,  such that
\be \label{gluk1}
\|f'_{\bi,t}(x)\| \le C\gam^n,\ \ \ \mbox{for all}\ \ \bi\in \Lam^n, \ x\in \ov{V},\ t\in \Jk.
\ee
Here in the left-hand side is the norm of differential with respect to $x\in \R^d$.
%and
%\be \label{gluk2}
%r_{\min} := \inf_{i \in \Lambda, t \in \mathcal{J}, x \in U } | f'_{i, t}(x) |>0.
%\ee
Fix $x_0 \in V$. 
For any finite sequence $\bi \in\Lambda^n$ we define 
$$
F_\bi(t) = f_{\bi,t}(x_0).
$$
Of course, this depends on $x_0$, but we suppress it from notation. 
For $\bi\in \Lam^\N$ we have 
\begin{equation} \label{conver}
\Pi_t(\bi) = F_\bi(t) :=  \lim_{n\to \infty} F_{\bi|_n}(t), 
\end{equation}
where $\Pi_t:\Lam^\N\to \R^d$ is the natural projection corresponding to the IFS $\Fk_t$ and
$\bi|_n = i_1\ldots i_n$.  Notice that this limit is well-defined, independent of $x_0$, and is uniform in $t\in \Jk$, by  uniform hyperbolicity (\ref{gluk1}).

\medskip

\begin{lem} \label{lem-analytic}
The function $F_\bi(\cdot)$ is real-analytic on $\Jk$, for any $\bi\in \Lam^\N$. Moreover, $F_{\bi|_n}(\cdot) \to F_\bi(\cdot)$ uniformly on $\Jk$ for all $\bi\in \Lam^\N$, together with derivatives of all orders.
\end{lem}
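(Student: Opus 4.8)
The plan is to establish the real-analyticity of $F_\bi(\cdot)$ by exhibiting it as the uniform limit of the real-analytic functions $F_{\bi|_n}(\cdot)$ together with all derivatives, and then invoke the standard fact that a uniform limit of real-analytic functions whose derivatives also converge uniformly is real-analytic. So the core of the argument is really the second sentence of the lemma; the first follows from it. Each $F_{\bi|_n}(t) = f_{\bi|_n,t}(x_0)$ is a finite composition of the real-analytic maps $f_{i,t}$, hence real-analytic on $\Jk$, and that part needs no comment.

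First I would set up uniform bounds on the $t$-derivatives of the compositions $f_{\bi|_n,t}$. The key point is that $\Jk$ is compact and each $f_i$ is real-analytic on a neighborhood of $\ov V \times \Jk$, so on a slightly smaller compact neighborhood all partial derivatives of $f_i$ (in both $x$ and $t$) up to any fixed order are uniformly bounded. Combining this with the uniform hyperbolicity hypothesis~(\ref{gluk1}), which gives $\|f'_{\bi,t}(x)\| \le C\gam^n$ — an exponentially small bound on the $x$-differential of the length-$n$ composition — I would show, by differentiating the composition $f_{i_1,t}\circ f_{i_2,t}\circ\cdots$ with respect to $t$ via the chain/product rule, that $\frac{d}{dt}F_{\bi|_n}(t)$ and more generally $\frac{d^k}{dt^k}F_{\bi|_n}(t)$ form Cauchy sequences in the sup norm on $\Jk$. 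The mechanism is that when one passes from $\bi|_n$ to $\bi|_{n+1}$, the change in $F_{\bi|_n}(t)$ is controlled by the contraction $C\gam^n$ (this already gives uniform convergence of $F_{\bi|_n}$ itself, which is~(\ref{conver})), and each $t$-derivative introduces only polynomially many extra terms, each of which carries at least one factor of a product of $x$-differentials of sub-compositions, hence is again exponentially small in the relevant index. I would organize this as a single estimate of the form
\[
\Bigl\| \frac{d^k}{dt^k}\bigl(F_{\bi|_{n+1}} - F_{\bi|_n}\bigr)\Bigr\|_{\infty,\Jk} \le C_k\, \theta^n
\]
for some $\theta \in (0,1)$ and constants $C_k$ depending only on $k$, the bounds on the derivatives of the $f_i$, and $C,\gam$ from~(\ref{gluk1}); summing a geometric series then yields uniform convergence of each derivative.

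The main obstacle — and the step I would spend the most care on — is bookkeeping the higher-order chain rule (Faà di Bruno) applied to a long composition and checking that every term in $\frac{d^k}{dt^k}(f_{i_1,t}\circ\cdots\circ f_{i_n,t})(x_0)$ genuinely inherits an exponentially decaying factor from~(\ref{gluk1}) as $n$ grows, rather than merely a polynomial-in-$n$ factor that could swamp the geometric decay. The clean way to handle this is to differentiate the recursion $F_{\bi|_n}(t) = f_{i_1,t}(F_{\sigma\bi|_{n-1}}(t))$ (where $\sigma$ is the shift), so that one only ever differentiates the \emph{single} outer map $f_{i_1,t}$ composed with an already-controlled inner function, and then argue inductively on $n$; the contraction factor $\gam$ appears each time one peels off a layer, and a routine induction shows the $t$-derivatives of $F_{\bi|_n}$ are uniformly bounded and Cauchy. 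Once uniform convergence with all derivatives is in hand, real-analyticity of the limit $F_\bi$ follows from the Cauchy-estimate characterization of real-analytic functions (uniform bounds $\|F_{\bi|_n}^{(k)}\|_\infty \le M\, R^{-k} k!$ pass to the limit), completing the proof.
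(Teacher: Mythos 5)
Your approach diverges from the paper's in a way that opens a genuine gap. The paper's proof complexifies: it extends each $f_i(x,t)$ to a holomorphic map $\widetilde f_i(z,\tau)$ on a neighborhood of $\ov V\times\Jk$ in $\C^d\times\C$, shows that the contraction property \eqref{gluk1} persists on a small complex polydisc neighborhood $[\ov V]^{\delta}\times\Ok$ (by continuity, after taking an $\ell$-fold composition to get a contraction ratio below $1/2$), and then the finite iterates $\widetilde f_{\bi|_n,\tau}(x_0)$ converge uniformly for $\tau\in\Ok$. At that point \emph{holomorphy of the limit is automatic} (a locally uniform limit of holomorphic functions is holomorphic), real-analyticity of $F_\bi$ is the restriction of that to $\R$, and convergence of all derivatives on $\Jk$ is a consequence of Cauchy's integral formula. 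No chain-rule bookkeeping is needed.

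You instead stay in the real category and try to show $C^\infty$ convergence by Fa\`a di Bruno, then invoke at the outset the ``standard fact that a uniform limit of real-analytic functions whose derivatives also converge uniformly is real-analytic.'' That is not a fact: any $C^\infty$ function (for instance $t\mapsto e^{-1/t^2}$ extended by $0$, which is not real-analytic at the origin) can be obtained on a compact interval as a limit of polynomials with all derivatives converging uniformly. So $C^\infty$ convergence of the $F_{\bi|_n}$ alone cannot give analyticity of $F_\bi$. You do pivot at the very end to the correct criterion --- uniform bounds of the form $\|F_{\bi|_n}^{(k)}\|_{\Jk}\le M R^{-k}k!$ with $M,R$ independent of $n$ \emph{and} of $k$, which do pass to the limit --- but you never establish such bounds. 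Your displayed estimate $\|\frac{d^k}{dt^k}(F_{\bi|_{n+1}}-F_{\bi|_n})\|_{\Jk}\le C_k\theta^n$ has a constant $C_k$ with no control in $k$; it shows that each derivative converges, not that the $k$-th derivatives grow at most like $R^{-k}k!$ uniformly in $n$. Extracting that factorial growth by tracking Fa\`a di Bruno through arbitrarily long compositions is exactly the hard combinatorial problem that the complexification step in the paper is designed to avoid: once the iterates are uniformly bounded holomorphic functions on a fixed complex neighborhood $\Ok$ of $\Jk$, the needed Cauchy estimates come for free with $R=\mathrm{dist}(\Jk,\partial\Ok)$. If you want to keep a real-variable proof you would have to replace your step with an explicit derivation of uniform-in-$n$ Gevrey-$1$ bounds; as written, the argument does not yield real-analyticity of the limit.
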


\begin{proof}
By assumption,  for every $\bi \in \Lam^n$, the function $F_\bi$ extends to a holomorphic function in a complex neighborhood of $\Jk$, and we are going to prove that for all $\bi \in \Lam^\N$ the sequence $F_{\bi|_n}$ converges to $F_\bi$ on a sufficiently small neighborhood uniformly. In order to achieve this, note that since $f_i(x,t):\,\ov{V}\times \Jk \to V$ is real-analytic,   it can be extended to a holomorphic (complex-analytic) function $\wtil{f}_i(z,\tau)$, defined on a 
neighborhood of $\ov{V}\times \Jk$ in  $\C^d\times \C$. Denote by ${[\ov{V}]^\delta}$ the closed $\delta$-neighborhood of $\ov{V}$ in $\C^d$ and let $\wtil{f}_{i,\tau} = \wtil{f}_i(\cdot,\tau)$. 
Choose $\ell \in\N$  so that $C\gam^\ell < 1/2$. Then $\|f'_{\bi,t}(x)\|<1/2$ for $\bi\in \Lam^\ell$, and $x\in \ov{V}$. 
By continuity, there exists $\delta>0$ such that
$\wtil{f}_{\bi,t}$, with $\bi\in \Lam^\ell$, is holomorphic on $[\ov{V}]^\delta$ and
\begin{equation}\label{niki1}
\|{\wtil{f}_{\bi,t}}'(z)\|< 1/2,\ \mbox{for all}\ \bi\in \Lam^\ell,\ z\in [\ov{V}]^\delta,\ t\in \Jk.
\end{equation}
Here in the left-hand side is the norm of the differential with respect to $z\in \C^d$. Thus, each $\wtil{f}_{\bi,t}$, with $\bi\in \Lam^\ell$, is a strict contraction on $[\ov{V}]^\delta$,  and since $\wtil{f}_{\bi,t}({\ov{V}}) = f_{\bi,t}(\ov{V})\subset V$, we obtain that $[\ov{V}]^\delta$ is mapped into its interior by $\wtil{f}_{\bi,t}$, for $t\in \Jk$. Then the same must be true for all $\tau$ in a sufficiently small complex neighborhood of $\Jk$, which we denote by $\Ok$. 
We can find a constant $L>1$ such that 
\begin{equation} \label{niki2}
\|\wtil{f}_{\bj,\tau}'(z)\|\le L,\ \mbox{for all}\ \bj \ \mbox{such that}\ \ |\bj| \le \ell-1,\ z\in [\ov{V}]^\delta,\ \tau\in \Ok,
\end{equation}
since there are finitely many holomorphic functions  involved.

Now, it follows from (\ref{niki1}) and (\ref{niki2}) that the function $\wtil{f}_{\bj,t}$, for {\em all}\ \  $\bj\in \bigcup_{n=1}^\infty \Lam^n$ and $t\in \Jk$, is well-defined and holomorphic in the interior of $\Wk:= [\ov{V}]^{\delta/L}$, and moreover, it maps $\Wk$ into $[\ov V]^\delta$. In addition, $\wtil{\Fk}^\ell_\tau = \{\wtil{f}_{\bi,\tau}\}_{\bi \in \Lam^\ell}$ is a strictly contracting IFS on $\Wk$, depending on
$\tau\in \Ok$ holomorphically. 
It follows that the finite iterates $\tau\mapsto \wtil{f}_{\bi|_{n},\tau}(x_0)$ converge  to $\Pi_\tau(\bi)$, the natural projection for $\wtil{\Fk}_\tau$, as $n\to \infty$, uniformly for $\tau\in \Ok$.
The uniform limit of holomorphic functions in an open set in $\C$ is holomorphic, and since $F_\bi(t) = \Pi_t(\bi)$ is the restriction of a holomorphic map to an interval on the real line, it is real-analytic. The uniform convergence of holomorphic functions implies uniform convergence of their derivatives as well.
\end{proof}

Next, for $\bi,\bj \in \bigcup_{n=1}^\infty \Lam^n$, let
\begin{equation*}
\Delta_{\bi, \bj}(t) = F_\bi(t) - F_\bj(t):\,\Jk \to \R^d.
\end{equation*}
Recall that this depends on $x_0$: $\Delta_{\bi, \bj}(t) =  f_{\bi,t}(x_0) - f_{\bj,t}(x_0)$.
For any $\eps > 0$, let
\begin{equation*}
E_{\eps} = \bigcap_{N=1}^{\infty} \bigcup_{n > N} 
\Big( \bigcup_{\bi, \bj \in \Lambda^n, i_1 \neq j_1}  \, ( \Delta_{\bi, \bj} )^{-1} B_{\eps^n} \Big)
\end{equation*}
and define the exceptional set $E=E(x_0)$ by
\begin{equation} \label{def-E}
E = \bigcap_{\eps > 0} E_{\eps}, 
\end{equation}
where $B_{\eps^n} = \{x\in \R^d:\ \|x\| \le \eps^n\}$ is the Euclidean ball in $\R^d$.

\begin{lem} \label{lem-ESDC}
If $t \notin E=E(x_0)$, then 
$\mathcal{F}_t$ satisfies the SESDC condition on $\{x_0\}\subset \ov V$.
\end{lem}

\begin{proof} Observe that $t \notin E$ implies $t \notin E_\eps$ for some $\eps>0$, hence $|f_{\bi,t}(x_0) - f_{\bj,t}(x_0)|\ge \eps^n$, for all $\bi,\bj\in \Lam^n$ with $i_1\ne j_1$, for all $n$ sufficiently large. \end{proof}

\begin{rem} {\em
In \cite{Hochman2014,Hochman2015} Hochman considered the case where $\mathcal{F}_t$ is an affine IFS. 
He defined the sets $E'_{\eps}$ and $E'$ as follows:
\begin{equation*}
E'_{\eps} = \bigcup_{N=1}^{\infty} \bigcap_{n > N} 
\Big( \bigcup_{\bi, \bj \in \Lambda^n, \bi \neq \bj} ( \Delta_{\bi, \bj} )^{-1} B_{\eps^n} \Big)
\end{equation*}
and 
\begin{equation} \label{def-Etag}
E' = \bigcap_{\eps > 0} E'_{\eps}. 
\end{equation}
If $t \notin E'$ then $\mathcal{F}_t$ satisfies the strong exponential separation condition on $\{x_0\}$      {\em along a subsequence}.
}
\end{rem}

%\begin{dfn}
%For a family of IFS $\Fk_t,\ t\in \Jk$, as above, and for $\bi,\bj\in \Lam^\N$ let $\Delta_{\bi,\bj}(t) = F_\bi(t) - F_\bj(t):\Jk \to \R^d$. We say that the family is {\em non-degenerate} if
%\begin{equation} \label{nondegen}
%\Delta_{\bi, \bj}(\cdot) \not\equiv 0 \ \  \ \mbox{for all $\bi, \bj \in \Lambda^{\mathbb{N}}$ with $i_1\ne j_1$.}
%\end{equation}
%\end{dfn}

  Denote by $[w]_q$ %$[\Delta_{\bi,\bj}(t)]_q$ 
the $q$-th component of a vector $w\in \R^d$, $1\le q \le d$.

\begin{dfn}
 We say that the family of IFS $\Fk_t,\ t\in \Jk$, as above, is {\em non-degenerate in $q$-th component} for some $1\le q \le d$ if
\be\label{nondegen2}
[\Delta_{\bi,\bj}(\cdot)]_q\not\equiv 0 \  \ \mbox{for all $\bi, \bj \in \Lambda^{\mathbb{N}}$ with $i_1\ne j_1$.}
\ee
\end{dfn}

In \cite{Hochman2014} an IFS family on $\R$ is called non-degenerate if $\Delta_{\bi,\bj}\equiv 0  \iff \bi=\bj$. This condition is equivalent to (\ref{nondegen2}) for $d=1$ when the maps of the IFS are injective. 

We next prove the following: 

\begin{thm}\label{thm_important}
Suppose that the  family of IFS $\Fk_t,\ t\in \Jk$, is non-degenerate in $q$-th component for some $1\le q\le d$.
Then the set $E$ from (\ref{def-E}) has Hausdorff dimension zero, and therefore, $\Fk_t$ satisfies the  SESDC condition on $\{x_0\}$ for  all parameters $t\in \Jk$ outside of an exceptional set of Hausdorff dimension zero.
\end{thm}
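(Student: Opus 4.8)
The plan is to upgrade the real-analyticity of Lemma~\ref{lem-analytic} to a uniform \emph{order-$k$ transversality} estimate, to transfer it from infinite to finite words, and then to turn it into a covering bound for $E$ forcing $\dim_H(E)=0$; the last assertion of the theorem then follows from the remark preceding it that $t\notin E$ implies $\Fk_t$ satisfies the strong exponential separation condition. I would first extract from the \emph{proof} of Lemma~\ref{lem-analytic} the quantitative content it actually produces: all the functions $F_\bi$, for $\bi$ ranging over finite \emph{and} infinite words, extend to holomorphic functions that are uniformly bounded on one fixed complex neighborhood $\Ok$ of $\Jk$, and $F_{\bi|_n}\to F_\bi$ uniformly in $\bi$ at an exponential rate $\rho_0^n$, $\rho_0\in(0,1)$, coming from the uniform hyperbolicity (\ref{gluk1}). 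Since $\Jk$ lies in the interior of $\Ok$, Cauchy's estimates then give, for every fixed $r$, a uniform bound $M_r:=\sup_{\bi}\sup_{t\in\Jk}\|F_\bi^{(r)}(t)\|<\infty$ and uniform convergence $F_{\bi|_n}^{(r)}\to F_\bi^{(r)}$ on $\Jk$ at rate $\rho_0^n$. In particular $\bi\mapsto F_\bi$ is continuous from $\Lam^\N$ (product topology) into $C^r(\Jk)$ for each $r$, so $(\bi,\bj,t)\mapsto\Delta_{\bi,\bj}^{(r)}(t)$ is continuous on the compact space $\Lam^\N\times\Lam^\N\times\Jk$.

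The key step is: there exist $k\in\N$ and $\delta>0$ with $\max_{0\le r\le k}\|\Delta_{\bi,\bj}^{(r)}(t)\|\ge\delta$ for all $\bi,\bj\in\Lam^\N$ with $i_1\ne j_1$ and all $t\in\Jk$. If this failed, then for each $k$ one could pick $\bi^{(k)},\bj^{(k)}\in\Lam^\N$ with $i_1^{(k)}\ne j_1^{(k)}$ and $t_k\in\Jk$ with $\max_{0\le r\le k}\|\Delta_{\bi^{(k)},\bj^{(k)}}^{(r)}(t_k)\|<(k+1)^{-1}$; passing to a convergent subsequence $\bi^{(k)}\to\bi$, $\bj^{(k)}\to\bj$, $t_k\to t^*$ (still $i_1\ne j_1$, hence $\bi\ne\bj$) and using the continuity above yields $\Delta_{\bi,\bj}^{(r)}(t^*)=0$ for every $r\ge0$; since $\Delta_{\bi,\bj}$ is real-analytic on the interval $\Jk$, this forces $\Delta_{\bi,\bj}\equiv0$, contradicting non-degeneracy (\ref{nondegen}). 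A short perturbation argument transfers this to finite words: appending a fixed symbol to $\bi,\bj\in\Lam^n$ and using the uniform rate $\rho_0^n$ on the first $k$ derivatives, there is $n_0$ such that for all $n\ge n_0$ and all $\bi,\bj\in\Lam^n$ with $i_1\ne j_1$ one has $\max_{0\le r\le k}\|\Delta_{\bi,\bj}^{(r)}(t)\|\ge\delta/2$ for all $t\in\Jk$.

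Finally I would prove a uniform sublevel-set estimate and carry out the covering count. Using the $C^{k+1}$-bounds $M_1,\dots,M_{k+1}$ and the lower bound $\delta/2$, one shows there are constants $N_0,C_0$, independent of $n$ and of the words, so that for small $\rho>0$ the set $\{t\in\Jk:\ \|\Delta_{\bi,\bj}(t)\|\le\rho\}$ is covered by at most $N_0$ intervals of length $\le C_0\rho^{1/k}$: tile $\Jk$ at a fixed scale $\eta\asymp\delta/M_{k+1}$; inside a given tile, classify the points of the sublevel set by which coordinate $\ell$ and which order $r\le k$ first realizes the transversality bound (at most $kd$ classes), note that the $M_{k+1}$-bound then propagates $|(\Delta_{\bi,\bj})_\ell^{(r)}|\gtrsim\delta$ across the whole tile for that $(r,\ell)$, and apply the classical fact that $|h^{(r)}|\ge c$ on an interval implies $\{|h|\le\rho\}$ is covered by $\le r$ intervals of length $\lesssim(\rho/c)^{1/r}\le C_0\rho^{1/k}$ for $\rho<1$. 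Now fix $s>0$. From $E\subset E_\eps\subset\bigcup_{n>N}\bigcup_{\bi,\bj\in\Lam^n,\ i_1\ne j_1}(\Delta_{\bi,\bj})^{-1}B_{\eps^n}$ and the above estimate (valid once $n\ge n_0$ and $\eps^n$ is small), the $n$-th block of this union is covered by $\le|\Lam|^{2n}N_0$ intervals of length $\le C_0\eps^{n/k}$, whence the corresponding $s$-sum is $\le N_0C_0^s\sum_{n>N}(|\Lam|^2\eps^{s/k})^n$. Choosing $\eps$ so small that $|\Lam|^2\eps^{s/k}<1$, this tends to $0$ as $N\to\infty$, so $\Hk^s(E)=0$; since $s>0$ was arbitrary, $\dim_H(E)=0$, and the strong exponential separation statement outside a zero-dimensional set follows.

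The main obstacle I expect is two-fold. First, the compactness argument for order-$k$ transversality genuinely needs the \emph{uniform-in-$\bi$} control of the holomorphic extensions and of all their $t$-derivatives — mere real-analyticity of each individual $F_\bi$ is insufficient — so one must verify that the proof of Lemma~\ref{lem-analytic} indeed yields a single neighborhood $\Ok$ and derivative bounds $M_r$ uniform over all (finite and infinite) words. Second, in the sublevel-set estimate one must extract a \emph{uniformly bounded} number of covering intervals even though an individual real-analytic $\Delta_{\bi,\bj}^{(r)}$ may oscillate arbitrarily often; the remedy is to work only at the fixed scale $\eta$ dictated by $\delta$ and $M_{k+1}$, rather than attempting to control the global level sets of $\Delta_{\bi,\bj}^{(r)}$.
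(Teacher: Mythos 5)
Your proposal is correct and follows the same global strategy as the paper: a compactness argument combining non-degeneracy with uniform holomorphic control of the $F_\bi$ (exactly the content of Lemma~\ref{lem-analytic} and its proof, which as you note already yields a single complex neighborhood $\Ok$ and hence uniform Cauchy bounds on all $t$-derivatives) gives order-$k$ transversality; a sublevel-set covering estimate then shows $\Hk^s(E)=0$ for every $s>0$. Two points of genuine divergence are worth recording. First, you state and prove order-$k$ transversality for \emph{infinite} words and then transfer to finite words by a perturbation at rate $\rho_0^n$; the paper (Lemma~\ref{delta_prop}) states it directly for finite words, though its compactness proof passes through infinite words anyway, so this is a cosmetic reorganization (and the restriction $n\ge n_0$ you incur is harmless because $E_\eps=\bigcap_N\bigcup_{n>N}E_{\eps,n}$ only sees large $n$). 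Second, and more substantively, for the covering step the paper invokes Hochman's Lemma~\ref{key_lem} (exponent $1/2^k$, $C^k$ intervals) ``applied to a component of $\Delta_{\bi,\bj}$''; since which component realizes $\|\Delta^{(p)}(t)\|>c$ may vary with $t$, this application is not immediate when $d>1$, whereas your fixed-scale tiling localizes the choice of coordinate $\ell$ and order $r$ to each tile and removes this ambiguity. Your replacement of Lemma~\ref{key_lem} by the van der Corput/Arkhipov--Karatsuba--Chubarikov-type bound (at most $r$ components by Rolle, total measure $\lesssim(\rho/c)^{1/r}$) gives the sharper exponent $1/k$, though any positive exponent suffices for the dimension-zero conclusion; you should be explicit that this sublevel-set estimate, while standard, is not the one cited in the paper and requires its own short justification. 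In short: same proof architecture, a cleaner treatment of the multidimensional coordinate-selection issue, and a different (slightly stronger, slightly less off-the-shelf) sublevel-set lemma.
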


Hochman \cite{Hochman2014,Hochman2015} proved, for a non-degenerate family of affine IFS, with a real-analytic dependence on parameter, that the set $E'$ from (\ref{def-Etag}) has zero {\em packing dimension}.

\begin{cor}\label{cor_sep}
For a family of IFS $\Fk_t,\ t\in \Jk$, as above,
assume that there exist $t_0 \in \mathcal{J}$ and $q$, $1\le q \le d,$
such that the sets $\pi_q\left( f_{i, t_0}( \overline{V} )\right)$ are pairwise disjoint for $i\in \Lam$, where $\pi_q:\R^d\to \R$ is the orthogonal projection to the $q$-th coordinate.
Then (\ref{nondegen2}) holds, and hence the set $E$ from (\ref{def-E}) has Hausdorff dimension zero. 
\end{cor}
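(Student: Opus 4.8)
The plan is to derive Corollary~\ref{cor_sep} from Theorem~\ref{thm_important} by checking the non-degeneracy hypothesis (\ref{nondegen}). The direction $\bi = \bj \Rightarrow \Delta_{\bi,\bj} \equiv 0$ is trivial, so the content is the converse: if $\bi \neq \bj$ in $\Lam^\N$, then $\Delta_{\bi,\bj}(\cdot) \not\equiv 0$. I would prove the contrapositive is avoided by exhibiting one parameter $t$ at which $\Delta_{\bi,\bj}(t) \neq 0$; by real-analyticity of $\Delta_{\bi,\bj}$ on $\Jk$ (Lemma~\ref{lem-analytic}), a single such $t$ forces $\Delta_{\bi,\bj} \not\equiv 0$. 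The natural candidate is $t = t_0$, the parameter for which the images $\{f_{i,t_0}(\ov V)\}_{i\in\Lam}$ are pairwise disjoint.

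First I would record the standard consequence of disjoint first-level images: since $\Fk_{t_0}$ is uniformly hyperbolic (by (\ref{gluk1})), the natural projection $\Pi_{t_0}:\Lam^\N\to\ov V$ is well-defined, and for any $\bi\in\Lam^\N$ the point $\Pi_{t_0}(\bi) = F_\bi(t_0)$ lies in $f_{i_1,t_0}(\ov V)$ (indeed in $f_{\bi|_n,t_0}(\ov V)$ for every $n$, since $F_{\bi|_n}(t_0)\in f_{\bi|_n,t_0}(\ov V)\subset f_{i_1,t_0}(\ov V)$ and this set is closed, so the limit stays inside). Now suppose $\bi\neq\bj$, and let $k\ge 1$ be minimal with $i_k\neq j_k$; write $\bi = \bw\bi'$, $\bj = \bw\bj'$ with $|\bw| = k-1$, $i'_1\neq j'_1$. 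Applying the map $f_{\bw,t_0}$, which is injective on $\ov V$ (as a composition of injective real-analytic maps — injectivity on $\ov V$ follows from the contraction/hyperbolicity, or simply note $f_{\bw,t_0}$ is a diffeomorphism onto its image since its differential is invertible by (\ref{gluk1})), it suffices to show $F_{\bi'}(t_0)\neq F_{\bj'}(t_0)$. But $F_{\bi'}(t_0)\in f_{i'_1,t_0}(\ov V)$ and $F_{\bj'}(t_0)\in f_{j'_1,t_0}(\ov V)$, and these two sets are disjoint by hypothesis since $i'_1\neq j'_1$. Hence $\Delta_{\bi,\bj}(t_0) = f_{\bw,t_0}^{-1}$-preimages aside — more cleanly, $\Delta_{\bi,\bj}(t_0) = F_{\bw\bi'}(t_0) - F_{\bw\bj'}(t_0)$, and since $f_{\bw,t_0}$ is injective with $F_{\bw\bi'}(t_0) = f_{\bw,t_0}(F_{\bi'}(t_0))$ and likewise for $\bj'$, we get $F_{\bw\bi'}(t_0)\neq F_{\bw\bj'}(t_0)$, i.e. $\Delta_{\bi,\bj}(t_0)\neq 0$.

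Then I would conclude: since $\Delta_{\bi,\bj}$ is real-analytic on the interval $\Jk$ by Lemma~\ref{lem-analytic} and does not vanish at $t_0$, it is not identically zero, so (\ref{nondegen}) holds. Theorem~\ref{thm_important} then applies verbatim and gives $\dim_H E = 0$.

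The only mild obstacle is bookkeeping around the fact that $F_\bi(t_0)$ and its finite truncations need not coincide — one must use that the level-$n$ cylinder image $f_{\bi|_n,t_0}(\ov V)$ is a \emph{closed} (hence $\Pi_{t_0}$-limit-preserving) subset contained in the disjoint level-1 piece $f_{i_1,t_0}(\ov V)$, so that $\Pi_{t_0}(\bi)$ stays in the correct level-1 piece even in the limit; and the injectivity of the prefix map $f_{\bw,t_0}$, which is immediate from uniform hyperbolicity (\ref{gluk1}) giving $\|f'_{\bw,t_0}(x)\|$ bounded and, by the inverse function theorem applied along the contraction, $f_{\bw,t_0}$ a local diffeomorphism; global injectivity on $\ov V$ follows because $f_{\bw,t_0}$ maps $\ov V$ into the small set $f_{w_1,t_0}(\ov V)$ and is a uniform contraction, hence a bijection onto its image. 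None of this is deep; it is purely a matter of assembling the pieces cleanly.
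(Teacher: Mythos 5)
Your overall strategy is exactly right: show $\Delta_{\bi,\bj}(t_0)\ne 0$ at the explicit parameter $t_0$ where level-one images are disjoint, then invoke real-analyticity (Lemma~\ref{lem-analytic}) to conclude $\Delta_{\bi,\bj}\not\equiv 0$, and hand the rest to Theorem~\ref{thm_important}. The bookkeeping about the natural projection landing in the closed set $f_{i_1,t_0}(\ov V)$ is correct and is the crucial observation.

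However, there is a genuine gap in the step where you handle $\bi\ne\bj$ with a common prefix $\bw$. You assert that $f_{\bw,t_0}$ is injective on $\ov V$ and offer two justifications, both of which fail under the stated hypotheses. Condition~(\ref{gluk1}) is an \emph{upper} bound $\|f'_{\bi,t}(x)\|\le C\gamma^n$ on the differential; it does not say the differential is invertible, so the inverse function theorem does not apply, and ``diffeomorphism onto its image'' does not follow. Similarly, a contraction need not be injective (a constant map is a contraction). Indeed, the corollary as stated is literally false for non-injective IFS: take $d=1$, $\Lam=\{1,2\}$, $f_1\equiv c_1$, $f_2\equiv c_2$ with $c_1\ne c_2$; the level-one images are disjoint singletons, yet $\Delta_{1\bi',1\bj'}\equiv 0$ for all $\bi'\ne\bj'$, violating~(\ref{nondegen}). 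So if you want the full biconditional~(\ref{nondegen}), you must explicitly invoke injectivity of the maps $f_{i,t_0}$ as an additional hypothesis (which does hold in the paper's application, where the $f_i$ are restrictions of projective linear maps, but is not part of the abstract setup of the section).

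The cleanest fix, which matches what is actually used downstream, is to observe that the proof of Lemma~\ref{delta_prop} only ever applies~(\ref{nondegen}) to limit sequences $\bi,\bj\in\Lam^\N$ with $i_1\ne j_1$ (the first symbols are preserved under the passage to subsequential limits). For such pairs your argument works verbatim, with no injectivity needed: $F_\bi(t_0)\in f_{i_1,t_0}(\ov V)$ and $F_\bj(t_0)\in f_{j_1,t_0}(\ov V)$ lie in disjoint compact sets, so $\Delta_{\bi,\bj}(t_0)\ne 0$, hence $\Delta_{\bi,\bj}\not\equiv 0$ by real-analyticity. This already yields order-$k$ transversality via Lemma~\ref{delta_prop} and hence $\dim_H E=0$ via Lemma~\ref{lem-conc}. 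In short: either restrict the non-degeneracy claim to pairs with distinct first symbols (which is all that Theorem~\ref{thm_important} needs), or add injectivity of the $f_i$ as an explicit hypothesis; your current justification of injectivity from~(\ref{gluk1}) is not valid.
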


To deduce the corollary from Theorem~\ref{thm_important}, it suffices to note that $[F_\bi(t_0)]_q \in \pi_q\left(f_{i_1,t_0}(\ov V)\right)$ whence $\Delta_{\bi,\bj}(t_0)\ne 0$ whenever $i_1\ne j_1$.

\medskip

For any smooth function $F : \mathcal{J} \to \R^d$,  denote $F^{(p)}(t) = \frac{d^p}{dt^p} F(t)$. 

\begin{dfn}
The family $\{ \mathcal{F}_t \}_{t \in \mathcal{J} }$ 
is said to be \emph{transverse of order $k$ in $q$-th component} for some $q, \ 1\le q \le d$, if there exists $c > 0$ such that for all $n \in \mathbb{N}$ 
and $\bi, \bj \in \Lambda^{n}$, with $i_1 \neq j_1$, we have 
\begin{equation*}
 \forall t \in \mathcal{J}, \ \exists p \in \{ 0, \cdots, k \} \text{\, s.t.  } 
 \left|\left[ \Delta^{(p)}_{\bi, \bj}(t)\right]_q \right| > c. 
\end{equation*}  
%Here the norm $\|\cdot\|$ is simply the Euclidean norm in $\R^d$.
\end{dfn}

%The above definition is different from \cite{Hochman2014} 
%and it simplifies the proof of Theorem \ref{thm_important}. 

\begin{lem}\label{delta_prop}
Suppose that the non-degeneracy in $q$-th component (\ref{nondegen2}) holds.
Then $\{ \mathcal{F}_t \}_{t \in \mathcal{J}}$ is transverse of order $k$ in $q$-th component for some $k \in \mathbb{N}$. 
\end{lem}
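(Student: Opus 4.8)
\textbf{Proof plan for Lemma~\ref{delta_prop}.}
The plan is to argue by contradiction using a compactness argument in the space of pairs of infinite words, exploiting that the relevant functions depend real-analytically on the parameter. Suppose the family is not transverse of order $k$ for any $k$. Then for every $k\in\N$ there exist $n=n_k\in\N$ and words $\bi^{(k)},\bj^{(k)}\in\Lam^{n_k}$ with $i^{(k)}_1\ne j^{(k)}_1$, together with a point $t_k\in\Jk$, such that $\|\Delta^{(p)}_{\bi^{(k)},\bj^{(k)}}(t_k)\|\le 1/k$ for all $p\in\{0,\ldots,k\}$. First I would replace the finite words by infinite ones: pad $\bi^{(k)},\bj^{(k)}$ arbitrarily to elements of $\Lam^\N$ (the tails do not affect the first symbol), and by Lemma~\ref{lem-analytic} the functions $F_{\bi^{(k)}}$, $F_{\bj^{(k)}}$, hence $\Delta_{\bi^{(k)},\bj^{(k)}}$, converge uniformly on $\Jk$ together with all derivatives to $F_{\bi},F_{\bj}$ for the limiting words (after passing to a subsequence so that $\bi^{(k)}\to\bi$, $\bj^{(k)}\to\bj$ in the product topology on $\Lam^\N$, $t_k\to t_*\in\Jk$ by compactness of $\Jk$). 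Note that $i_1\ne j_1$ is preserved in the limit, so in particular $\bi\ne\bj$.

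Next I would pass to the limit in the inequalities. For a fixed $p$, the bound $\|\Delta^{(p)}_{\bi^{(k)},\bj^{(k)}}(t_k)\|\le 1/k$ holds for all $k\ge p$; using uniform convergence of $\Delta^{(p)}_{\bi^{(k)},\bj^{(k)}}$ to $\Delta^{(p)}_{\bi,\bj}$ on $\Jk$ (this is where I need the ``together with derivatives of all orders'' clause of Lemma~\ref{lem-analytic}, combined with $t_k\to t_*$ and continuity of $\Delta^{(p)}_{\bi,\bj}$), I get $\Delta^{(p)}_{\bi,\bj}(t_*)=0$. Since $p$ was arbitrary, \emph{all} derivatives of the real-analytic function $\Delta_{\bi,\bj}$ vanish at the interior-or-boundary point $t_*\in\Jk$. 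By real-analyticity on a neighborhood of $\Jk$ (recall $f_i$ is real-analytic on a neighborhood of $\ov V\times\Jk$, and $\Jk$ is an interval, hence connected), a function whose every derivative vanishes at one point is identically zero: $\Delta_{\bi,\bj}(\cdot)\equiv 0$ on $\Jk$. But the non-degeneracy condition (\ref{nondegen}) then forces $\bi=\bj$, contradicting $i_1\ne j_1$. This contradiction proves that the family is transverse of order $k$ for some $k$.

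The main technical point to be careful about is the two-layered limit: we must choose the subsequence so that $\bi^{(k)}\to\bi$, $\bj^{(k)}\to\bj$, and $t_k\to t_*$ simultaneously, and then justify that $\Delta^{(p)}_{\bi^{(k)},\bj^{(k)}}(t_k)\to\Delta^{(p)}_{\bi,\bj}(t_*)$ for each fixed $p$. This follows by writing
$$
\|\Delta^{(p)}_{\bi^{(k)},\bj^{(k)}}(t_k)-\Delta^{(p)}_{\bi,\bj}(t_*)\|
\le \|\Delta^{(p)}_{\bi^{(k)},\bj^{(k)}}-\Delta^{(p)}_{\bi,\bj}\|_{\infty,\Jk}
+\|\Delta^{(p)}_{\bi,\bj}(t_k)-\Delta^{(p)}_{\bi,\bj}(t_*)\|,
$$
where the first term goes to $0$ by the uniform convergence in Lemma~\ref{lem-analytic} (note convergence of $F_{\bi^{(k)}}$ to $F_\bi$ holds whenever $\bi^{(k)}$ and $\bi$ agree on longer and longer prefixes, which is exactly convergence in the product topology), and the second by continuity. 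One should also observe $k_0$-dependence: the bound on the $p$-th derivative only enters once $k\ge p$, which is harmless since we fix $p$ first and then let $k\to\infty$. Everything else is soft, and there is no serious obstacle; the real-analyticity (rather than mere smoothness) of the family is essential and is supplied by the standing hypotheses.
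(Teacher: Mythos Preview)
Your argument is essentially the same as the paper's: a compactness argument extracting a subsequence with $(\bi^{(k)},\bj^{(k)},t_k)\to(\bi,\bj,t_*)$, then using uniform convergence of derivatives (via Lemma~\ref{lem-analytic}) and real-analyticity to obtain $\Delta_{\bi,\bj}\equiv 0$, contradicting non-degeneracy. The only cosmetic difference is your explicit padding of finite words to infinite ones and the spelled-out triangle-inequality justification of the double limit, both of which the paper leaves implicit.
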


\begin{proof}
Suppose that for all $k\in \N$ the family $\{ \mathcal{F}_t \}_{t \in \mathcal{J}}$ is not transverse of order $k$ in $q$-th component . 
Then by assumption, for $\{c_k\}$ with $c_k < 1/k$, 
we can choose $n(k)$, $\bi^{(k)}, \bj^{(k)} \in \Lambda^{n(k)}$, 
with $i^{(k)}_1 \neq j^{(k)}_1$ and a point $t_k \in \mathcal{J}$ 
such that 
\begin{equation*}
\left| \left[\Delta^{(p)}_{ \bi^{(k)}, \bj^{(k)} } (t_k)\right]_{q} \right| < c_k 
\end{equation*}
for $0 \leq p \leq k$. Since $\Lam$ is finite,
passing to a subsequence $\{ k_{l} \}$, we can assume that $t_{k_{l}} \to t_0\in \Jk$, 
$\bi^{( k_{l} )} \to \bi \in \Lambda^{\mathbb{N}}$ and $\bj^{( k_{l} )} \to \bj \in \Lambda^{\mathbb{N}}$, with $i_1\ne j_1$. By Lemma~\ref{lem-analytic}, the complex extension of 
$\Delta_{\bi^{( k_{l} )}, \bj^{( k_{l} )}}$ converges to the complex extension of $\Delta_{\bi, \bj}$ uniformly on a complex neighborhood of $\Jk$, and hence the same holds for $p$-th derivatives.
Thus for all $p \geq 0$, we have
\begin{equation*}
\left[ \Delta^{(p)}_{\bi, \bj} (t_0) \right]_q = 
\lim_{l \to \infty} 
\left[ \Delta^{(p)}_{ \bi^{(k_{l}),} \bj^{(k_{l})} } (t_{k_{l}}) \right]_q = 0. 
\end{equation*}
Since $[\Delta_{\bi, \bj}]_q$ is real-analytic, 
the vanishing of its derivatives implies $[\Delta_{\bi, \bj}]_q \equiv 0$ on $\Jk$, contradicting (\ref{nondegen2}), since $i_1\ne j_1$ by construction.
\end{proof}

For a $C^k$-smooth function $F : J \to \mathbb{C}$, where $J\subset \R$ is a compact interval, write 
\begin{equation*}
\| F \|_{ J, k } = \max_{ p \in \{0, \cdots, k\} } \sup_{t \in J} | F^{(p)}(t) |,\ \ \|F\|_{J} = \|F\|_{J,0},
\end{equation*}
and similarly for vector-functions.
%, using the $\ell^\infty$-norm.

\begin{lem}[Lemma 5.8 in \cite{Hochman2014}]\label{key_lem}
Let $k \in \N$ and let $F : J\to \R$ be a $k$ times continuously differentiable function on a compact interval $J\subset \R$.
Let $M = \| F \|_{J, k}$, and let $0 < b < 1$ 
be such that for every $t \in J$ there is  $p \in \{ 0, \cdots, k\}$ with 
$| F^{(p)}(t) | > b$. Then there exists a constant $C=C_{b,M,|J|}\ge 1$ such that for every $0 < \rho < (b/2)^{2^k}$, 
the set $F^{-1} (-\rho, \rho)\subset J$ can be covered by 
$C^k$ intervals of length 
$\leq 2( \rho/b )^{1/2^k} $ each. 
\end{lem}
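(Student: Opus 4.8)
The plan is to prove the lemma by induction on $k$. For $k=0$ the hypothesis reads $|F(t)|>c$ for all $t\in\mathcal{J}$, so for $0<\rho<(c/2)=(c/2)^{2^0}$ the set $F^{-1}(-\rho,\rho)\cap\mathcal{J}$ is empty and any $C\ge 1$ works. For the inductive step I would assume the statement for $k-1$, with a constant $C_{k-1}=C_{k-1}(c,M,|\mathcal{J}|)$ that may be taken nondecreasing in the interval length and in $M$, and let $F$ satisfy the order-$k$ hypothesis with $0<\rho<(c/2)^{2^k}$. Put $A=F^{-1}(-\rho,\rho)\cap\mathcal{J}$ and introduce the intermediate scale $\rho'=\sqrt{c\rho}$; a one-line computation using $c<1$ gives $c\rho'^{\,2}=c^2\rho<(c/2)^{2^k}\cdot c<(c/2)^{2^k}$, so $\rho'<(c/2)^{2^{k-1}}$ and the inductive hypothesis is applicable at scale $\rho'$.

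First I would pass to the enlarged sublevel set $A^+=\{t\in\mathcal{J}:\ |F(t)|\le c/2\}$, which is closed and contains $A$ because $\rho<(c/2)^{2^k}\le c/2$. Its connected components are closed intervals $[a_l,b_l]$ (at most two of which meet $\partial\mathcal{J}$), and on each one $|F|\le c/2<c$, so the order-$k$ hypothesis on $F$ forces, for every $t\in[a_l,b_l]$, some $p\in\{1,\dots,k\}$ with $|F^{(p)}(t)|>c$; equivalently $G:=F'$ satisfies the order-$(k-1)$ hypothesis on $[a_l,b_l]$ with the same $c$ (and $\|G\|_{[a_l,b_l],k-1}\le M$). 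The key point is to bound the number of components $[a_l,b_l]$ that meet $A$: each such bounded component contains a boundary point of $A^+$, where $|F|=c/2$, and a point where $|F|<\rho<c/4$, so the oscillation of $F$ on it exceeds $c/4$; since oscillations over disjoint subintervals sum to at most the total variation $\int_\mathcal{J}|F'|\le M|\mathcal{J}|$, at most $4M|\mathcal{J}|/c$ bounded components are relevant, hence at most $N:=4M|\mathcal{J}|/c+2$ components in all. It therefore suffices to cover $A\cap[a_l,b_l]$ over these $\le N$ components.

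Fix such a component $I=[a_l,b_l]$. Applying the inductive hypothesis to $G=F'$ on $I$ at scale $\rho'$, the set $\{t\in I:\ |F'(t)|<\rho'\}$ is covered by at most $C_{k-1}^{k-1}$ intervals, each of length $\le 2(\rho'/c)^{1/2^{k-1}}=2(\rho/c)^{1/2^k}$ (using $\rho'/c=(\rho/c)^{1/2}$); consequently the complementary set $\{t\in I:\ |F'(t)|\ge\rho'\}$ has at most $C_{k-1}^{k-1}+1$ connected components. On each of those components $F'$ has constant sign, being continuous and bounded away from $0$, so $F$ is strictly monotone there and $\{|F|<\rho\}$ meets it in a single interval of length $\le 2\rho/\rho'=2(\rho/c)^{1/2}\le 2(\rho/c)^{1/2^k}$. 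Thus $A\cap I$ is covered by at most $2C_{k-1}^{k-1}+1$ intervals of the required length, and summing over the $\le N$ relevant components covers $A$ by at most $N(2C_{k-1}^{k-1}+1)$ such intervals. Taking $C=\max\{C_{k-1},3N\}$ gives $C^k\ge 3N\cdot C_{k-1}^{k-1}\ge N(2C_{k-1}^{k-1}+1)$; since one may start the recursion with $C_0=1$, this choice stabilizes at $C=3N=12M|\mathcal{J}|/c+6$, independent of $k$, which is the asserted $C=C_{c,M,|\mathcal{J}|}$.

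The main obstacle is the component-counting step — bounding how many components of the enlarged level set $\{|F|\le c/2\}$ can be relevant — since this is the only genuinely global input, is where the dependence of $C$ on $M$ and $|\mathcal{J}|$ enters, and has to go through a bounded-variation estimate rather than anything local. By contrast, the remaining work is routine: checking that $\rho'=\sqrt{c\rho}$ lies below $(c/2)^{2^{k-1}}$, that both families of covering intervals have length at most $2(\rho/c)^{1/2^k}$, and that the constant does not grow with $k$, are all elementary manipulations of the exponents.
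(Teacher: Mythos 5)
The paper does not supply its own proof of this lemma---it is quoted verbatim from Hochman's work and cited as Lemma~5.8 of \cite{Hochman2014}---so there is no in-paper argument to compare against. Your blind proof is a correct reconstruction of the standard induction-on-$k$ scheme used for this type of quantitative transversality statement: base case from the pigeonhole hypothesis at order $0$; pass to the enlarged sublevel set $\{|F|\le c/2\}$, on whose components $F'$ inherits the order-$(k-1)$ hypothesis; control the number of relevant components via a total-variation (oscillation) count; and split each such component into the pieces covered by the inductive hypothesis at the intermediate scale $\rho'=\sqrt{c\rho}$ and the pieces where $|F'|\ge\rho'$, on which monotonicity gives a single short interval. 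The scale bookkeeping $(\rho'/c)^{1/2^{k-1}}=(\rho/c)^{1/2^k}$ and $\rho/\rho'=(\rho/c)^{1/2}\le(\rho/c)^{1/2^k}$ is right, and the stabilization of $C$ at $3N=12M|\mathcal{J}|/c+6$ is consistent with the claimed dependence on $c$, $M$, $|\mathcal{J}|$ only.

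Two minor slips are worth fixing. First, the displayed chain ``$c\rho'^{\,2}=c^2\rho<(c/2)^{2^k}\cdot c<(c/2)^{2^k}$, so $\rho'<(c/2)^{2^{k-1}}$'' does not actually yield the conclusion (from $c\rho'^2<(c/2)^{2^k}$ with $c<1$ you only get the weaker $\rho'^2<(c/2)^{2^k}/c$); what you want is the direct computation $\rho'^2=c\rho<c\,(c/2)^{2^k}<(c/2)^{2^k}$, which is a one-character correction. Second, the sentence asserting that $\{t\in I:|F'(t)|\ge\rho'\}$ has at most $C_{k-1}^{k-1}+1$ components is not literally true---that set could have many more components hidden inside the covering intervals $J_l$. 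What you mean, and what the argument uses, is that $I\setminus\bigcup_l J_l$ has at most $C_{k-1}^{k-1}+1$ components, each of which lies in $\{|F'|\ge\rho'\}$; the extra components of $\{|F'|\ge\rho'\}$ inside $\bigcup_l J_l$ are already covered and hence irrelevant. With these small emendations the argument is complete.
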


\begin{lem} \label{lem-conc}
If the family of IFS $\{ \mathcal{F}_t \}_{t \in \mathcal{J} }$ is transverse 
of order $k \geq 1$ in $q$-th component, for some $q$, $1\le q\le d$, on the compact interval $\mathcal{J}$, 
then the set $E$ from (\ref{def-E}) has Hausdorff dimension zero. 
\end{lem}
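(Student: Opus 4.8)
The plan is to prove Lemma~\ref{lem-conc} by a covering argument, estimating the Hausdorff dimension of $E$ directly from the definition (\ref{def-E}). Fix $\eps>0$; since $E\subset E_\eps$ for every $\eps$, it suffices to show that $\dim_H E_\eps$ is bounded by a quantity tending to $0$ as $\eps\to 0$. Recall that
\[
E_\eps = \bigcap_{N=1}^\infty \bigcup_{n>N}\Big(\bigcup_{\bi,\bj\in\Lam^n,\ i_1\ne j_1}(\Delta_{\bi,\bj})^{-1}B_{\eps^n}\Big),
\]
so for each fixed $N$ the set $E_\eps$ is contained in $\bigcup_{n>N}\bigcup_{\bi,\bj}(\Delta_{\bi,\bj})^{-1}B_{\eps^n}$, and I will produce an efficient cover of this union by short intervals.

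First I would pin down the quantitative inputs. By transversality of order $k$, there is $c>0$ so that for every $n$ and every admissible pair $\bi,\bj\in\Lam^n$ with $i_1\ne j_1$, and every $t\in\Jk$, some derivative $\Delta^{(p)}_{\bi,\bj}(t)$ with $0\le p\le k$ has norm $>c$. Passing to a single coordinate of the vector-valued $\Delta_{\bi,\bj}$ (the one realizing the norm bound, after absorbing a dimensional constant into $c$), I get a scalar $C^k$-function $F=F_{\bi,\bj}$ on $\Jk$ with $|F^{(p)}(t)|>c'$ for some $p\le k$ at each $t$. By Lemma~\ref{lem-analytic} together with uniform hyperbolicity (\ref{gluk1}), the functions $F_\bi$ and hence the differences $\Delta_{\bi,\bj}$ are bounded in $C^k$-norm on $\Jk$ uniformly over all $\bi,\bj$: $\|\Delta_{\bi,\bj}\|_{\Jk,k}\le M$ for a constant $M$ independent of $n$. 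Now the event $(\Delta_{\bi,\bj})^{-1}B_{\eps^n}$ is contained in $F^{-1}(-\rho,\rho)$ with $\rho=\eps^n$ (again up to a fixed dimensional constant), so Lemma~\ref{key_lem} applies once $\eps^n<(c'/2)^{2^k}$, i.e.\ for all $n$ past some threshold $n_0(\eps)$: the set $(\Delta_{\bi,\bj})^{-1}B_{\eps^n}\cap\Jk$ is covered by $C_1^k$ intervals each of length at most $2(\eps^n/c')^{1/2^k}$.

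Next I would assemble the cover and count. For fixed $n>\max(N,n_0(\eps))$ there are at most $|\Lam|^{2n}$ pairs $(\bi,\bj)$, so $\bigcup_{\bi,\bj}(\Delta_{\bi,\bj})^{-1}B_{\eps^n}$ is covered by at most $C_1^k|\Lam|^{2n}$ intervals of length $\le 2(\eps^n/c')^{1/2^k}=: \ell_n$. Hence, for any $s>0$, the $s$-dimensional Hausdorff pre-measure of $\bigcup_{n>N}(\cdots)$ at scale $\sup_{n>N}\ell_n$ is at most
\[
\sum_{n>N} C_1^k|\Lam|^{2n}\,\ell_n^{\,s}
\;\le\; C_1^k (2/c'^{1/2^k})^s \sum_{n>N}\big(|\Lam|^{2}\,\eps^{\,s/2^k}\big)^{n}.
\]
Choosing $s$ so that $|\Lam|^{2}\eps^{s/2^k}<1$, i.e.\ $s>\frac{2^{k+1}\log|\Lam|}{\log(1/\eps)}$, the geometric series converges and its tail tends to $0$ as $N\to\infty$; since $\ell_n\to0$, this shows the $s$-dimensional Hausdorff measure of $E_\eps$ is $0$, so $\dim_H E_\eps\le \frac{2^{k+1}\log|\Lam|}{\log(1/\eps)}$. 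Letting $\eps\to0^+$ gives $\dim_H E=0$, as desired.

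The main obstacle, and the only place requiring care, is the reduction from the vector-valued transversality statement to a scalar one to which Lemma~\ref{key_lem} applies, while keeping the constants $c$ and $M$ uniform over the infinitely many pairs $(\bi,\bj)$ of all lengths $n$. The uniformity of $M$ is handled by Lemma~\ref{lem-analytic}'s uniform $C^k$-convergence of $F_{\bi|_n}$ together with (\ref{gluk1}) (which forces the tails to decay geometrically, so a single bound works for all finite $\bi$); the uniformity of $c$ is exactly what transversality of order $k$ provides. One technical subtlety is that the coordinate of $\Delta_{\bi,\bj}$ achieving the lower bound may depend on $t$, but since there are only $d$ coordinates one can split $\Jk$ into finitely many (not necessarily connected, but that is irrelevant) pieces on which a single coordinate works, or simply apply Lemma~\ref{key_lem} coordinate-by-coordinate and intersect the resulting covers — in either case only the constant $C_1^k$ changes by a factor depending on $d$ and $k$, which does not affect the dimension bound. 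Everything else is the routine geometric-series bookkeeping above.
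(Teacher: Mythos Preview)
Your proof is correct and follows essentially the same route as the paper: invoke the uniform $C^k$ bound (obtained from Lemma~\ref{lem-analytic} via holomorphic extension), apply Lemma~\ref{key_lem} to each pair $(\bi,\bj)$ to get covers of $(\Delta_{\bi,\bj})^{-1}B_{\eps^n}$ by $O(1)$ intervals of length $O(\eps^{n/2^k})$, sum over the $|\Lambda|^{2n}$ pairs and over $n$, and observe that for any fixed $s>0$ the resulting geometric series converges once $\eps$ is small. Your treatment of the vector-to-scalar reduction is in fact more explicit than the paper's, which simply says ``applied to a component of $\Delta_{\bi,\bj}$''; your remarks about splitting into $d$ coordinate cases (at the cost of a harmless $d$-dependent factor in $C_1^k$) are exactly what is needed to make that step precise.
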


\begin{proof}
Extending the real-analytic functions to the complex plane, as in Lemma~\ref{lem-analytic}, since 
\begin{equation*}
\sup_n \sup_{\bi, \bj \in \Lambda^n, i_1 \neq j_1} \| \Delta_{\bi, \bj} \|_{\Ok} < \infty
\end{equation*}
on a neighborhood $\Ok$ of $\Jk$,
and $\Delta_{\bi, \bj}(\cdot)$ is holomorphic on $\Ok$ for all $\bi, \bj \in \Lambda^n$, we have 
\begin{equation} \label{normal}
M:=\sup_{n} \sup_{\bi, \bj \in \Lambda^n, i_1 \neq j_1} \| \Delta_{\bi, \bj} \|_{ \mathcal{J} , k} < \infty.  
\end{equation}
Let 
\begin{equation} \label{En}
E_{\eps, n} = \bigcup_{\bi, \bj \in \Lambda^n, i_1 \neq j_1} ( \Delta_{\bi, \bj} )^{-1}( B_{\eps^n} ). 
\end{equation}
Then 
\begin{equation}\label{E}
E_{\eps} = \bigcap_{N = 1}^{\infty} \bigcup_{n > N} E_{\eps, n}. 
\end{equation}
Let $\bi, \bj \in \Lambda^n$, with $i_1\ne j_1$. % and assume that $\|\Delta_{\bi,\bj}(t)\|< \eps^n$.
Since the family is transverse in $q$-th component, we can apply Lemma  \ref{key_lem} to  $[\Delta_{\bi,\bj}]_q$, to obtain that for $\eps$ sufficiently small, the set 
\begin{equation*}
( \Delta_{\bi, \bj} )^{-1} ( B_{\eps^n} ) \subseteq [\Delta_{\bi,\bj}]_q^{-1}\bigl([-\eps^n,\eps^n]\bigr)
\end{equation*}
may be covered by $C^k$ intervals of length $\leq 2( \eps^n \cdot c^{-1} )^{1/2^k}$.
It follows that the set  $E_{\eps,n}$ from (\ref{En})
may be covered by $O( | \Lambda |^{2n} \cdot C^k )$ 
intervals of length $\leq 2 ( \eps^n \cdot c^{-1} )^{1/2^k}$.
Fix $s>0$ and choose $\eps>0$ such that $|\Lam|^2 \eps^{s/2^k}<1$. Writing $\Hk^s$ for the $s$-dimensional Hausdorff measure, we obtain from (\ref{E}) that
$$
\Hk^s(E_\eps) \le O(1)\cdot \sum_{n\ge 1} |\Lam|^{2n} C^k {\bigl(\eps^n \cdot c^{-1}\bigr)}^{s/2^k} < \infty.
$$
 It follows that $\Hk^s(E_\eps)<\infty$ whence $\dim_H(E) \le \dim_H(E_\eps) \le s$, and since $s>0$ was arbitrary we obtain $\dim_H(E)=0$.
\end{proof}

\begin{proof}[Proof of Theorem~\ref{thm_important}]
This is now immediate from Lemmas \ref{delta_prop} and \ref{lem-conc}.
\end{proof}

%%%%%%%%%%%%%%%%%%%%%

\subsection{Proof of Theorem \ref{main_thm}}

The next lemma follows by an application of Fubini's Theorem.
 
\begin{lem}\label{Fubini}
Let $F \subset \mathbb{R}^n$ and let $v \in \mathbb{R}^n$ be a nonzero vector. 
Assume that for every $x_0 \in \mathbb{R}^n$, 
the set $\{ x_0 + t v : t \in \mathbb{R} \} \cap F$ 
has 1-dimensional Lebesgue measure $0$. Then the set $F$ has $n$-dimensional Lebesgue measure $0$. 
\end{lem}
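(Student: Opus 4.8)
The plan is to prove Lemma~\ref{Fubini} by a direct application of Fubini's Theorem after choosing convenient coordinates adapted to the direction $v$.

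First I would set up coordinates. Since $v \neq 0$, I can choose a linear isomorphism $T : \R^n \to \R^n$ (indeed an element of $GL_n(\R)$) mapping $v$ to the standard basis vector $e_n$; concretely, complete $v/\|v\|$ to an orthonormal basis and let $T$ be the inverse of the corresponding orthogonal matrix, so $T$ is in fact a rotation. Linear isomorphisms map Lebesgue-null sets to Lebesgue-null sets and map lines in direction $v$ to lines in direction $e_n$, so it suffices to prove the statement for $v = e_n$. Thus, replacing $F$ by $T(F)$, I assume without loss of generality that $v = e_n$, and the hypothesis becomes: for every $x_0 \in \R^n$, the line $\{x_0 + t e_n : t \in \R\}$ meets $F$ in a set of $1$-dimensional Lebesgue measure zero.

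Next I would invoke Fubini's Theorem in the form that applies to null sets (and address the measurability issue, which is the only real subtlety — see below). Write points of $\R^n$ as $(y, s)$ with $y \in \R^{n-1}$ and $s \in \R$. For each fixed $y \in \R^{n-1}$, the slice $F_y = \{ s \in \R : (y,s) \in F \}$ is exactly the intersection of $F$ with the line through $(y,0)$ in direction $e_n$ (up to the obvious identification), hence by hypothesis $\Leb(F_y) = 0$ for every $y$, where $\Leb$ denotes $1$-dimensional Lebesgue measure. By Fubini's Theorem (Tonelli form, for the nonnegative function $\mathbf{1}_F$),
\begin{equation*}
\mathrm{Leb}_n(F) = \int_{\R^{n-1}} \Leb(F_y)\, dy = \int_{\R^{n-1}} 0 \, dy = 0,
\end{equation*}
which is the desired conclusion.

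The main obstacle — and the one point deserving care — is measurability: Fubini/Tonelli in the stated slice form requires $F$ to be (Lebesgue or Borel) measurable, and a priori $F$ is an arbitrary subset of $\R^n$. I would handle this in the standard way. If $F$ is assumed measurable (which is the intended reading, and is the case in all applications in this paper, where $F$ is the exceptional set $E$, a Borel set), the argument above is complete. If one wants the statement for arbitrary $F$, one replaces $F$ by a measurable hull: choose a $G_\delta$ set $\widetilde{F} \supseteq F$ with $\mathrm{Leb}_n^*(F) = \mathrm{Leb}_n(\widetilde{F})$, where $\mathrm{Leb}_n^*$ is outer measure; then each slice $\widetilde{F}_y$ is Borel, and one checks (again via Fubini applied to $\widetilde{F} \setminus$ a suitable null set, or directly from the hypothesis applied to lines) that $\Leb(\widetilde{F}_y) = 0$ for a.e.\ $y$, whence $\mathrm{Leb}_n^*(F) = \mathrm{Leb}_n(\widetilde{F}) = 0$. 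In the present paper only the measurable case is needed, so I would simply state the hypothesis with $F$ measurable (as it implicitly is) and give the one-line Fubini argument.
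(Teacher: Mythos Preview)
Your proof is correct and follows exactly the paper's approach: the paper merely states that the lemma ``follows by an application of Fubini's Theorem,'' and you spell this out via the linear change of variables reducing to $v=e_n$ followed by the one-line Tonelli computation.

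One small caveat worth noting: your measurable-hull sketch for arbitrary (possibly non-measurable) $F$ does not actually go through --- under CH there are Sierpi\'nski-type subsets of the plane all of whose slices in a fixed direction are countable (hence null) yet which have full outer measure, so the lemma genuinely requires $F$ to be Lebesgue measurable. You correctly observe that only the measurable case is needed here (the exceptional set $E$ in the paper is Borel), so this does not affect the application.
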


%Recall that $[w]_q$ denotes the $q$-th component of  a vector $w\in \R^d$.

\begin{proof}[Proof of Theorem \ref{main_thm}] (i) Let $\Sig=\Sig_{v_1,\ldots,v_{d+1}}$ be a simplicial cone in $\R^{d+1}$. By a coordinate change, we can assume without loss of generality, that $\Sig\setminus \{0\}$ is contained in the subspace $x_{d+1}>0$.
Let $\Uk \subset \Xk_{\Sig,m}$ be a small open set in $(GL_{d+1}(\R))^m$ of 
$m$-tuples of matrices for which $\Sig$ is strictly invariant. Choose
vectors $w_i \in \R^{d+1} \ (i \in \Lambda)$, such that $[w_i]_{d+1} = 1$ for all $i$ and for some fixed $q$, $1\le q \le d$, the components $[w_i]_q$, for $i\in \Lam$, are all distinct, in such a way that 
\be \label{w-cond}
w_i \in \Sigma_{ A_i v_1, \ldots,A_i v_{d+1} }\ \ \mbox{ for all } \ \ (A_i)_{i \in \Lambda} \in \Uk.
\ee
This is possible when $\Uk$ is sufficiently small. (In fact, there is no difficulty in ensuring that all components of $w_i$ are distinct since the cones have nonempty interior.)
Let $(A_i)_{i \in \Lambda} \in \Uk$, and 
for each $t \geq 0$ and $i\in \Lam$ let $A_{i,t}$ be such that 
$$A_{i, t} v_j = A_i v_j + t w_i,\ j=1,\ldots,d+1.$$ 
Condition (\ref{w-cond}) guarantees that $\{A_{i,t}v_j\}_{j=1}^{d+1}$ is linearly independent, and hence $A_{i,t}\in GL_{d+1}(\R)$ for all $t\ge 0$. This is a consequence of the following elementary claim.

\medskip

\noindent
{\bf Claim.} {\em Let $y_1,\ldots,y_{d+1}\in \R^{d+1}$ be linearly independent, and suppose that $w = \sum_{k=1}^{d+1} a_k y_k$ for some $a_k\ge 0$. Then
the family $\{y_1 + w,\ldots, y_{d+1} + w\}$ is linearly independent as well.}

\begin{proof}[Proof of the Claim.]
We have
$$
\sum_{j=1}^{d+1} c_j \Bigl(y_j + \sum_{k=1}^{d+1} a_k y_k\Bigr) = 0\ \ \Longrightarrow \ \ \sum_{j=1}^{d+1} \Bigl(c_j + a_j \sum_{k=1}^{d+1} c_k \Bigr) y_j = 0,
$$
hence $c_j + a_j \sum_{k=1}^{d+1} c_k = 0$ for all $j$. If $\sum_{k=1}^{d+1} c_k\ne 0$, we obtain a contradiction, in view of $a_j\ge 0,\ j=1,\ldots, d+1$; thus $c_j=0,\ j=1,\ldots, d+1$, as claimed.
\end{proof}

Let $\Ak_t = \{A_{i,t}\}_{i\in \Lam}$ be the family of matrices defined above, for $t\ge 0$, and let $\Fk_t=\Fk_{\Ak_t}$ be the corresponding one-parameter family of IFS on the set $\ov{V} \subset \R^d$ obtained by projection
of $\Sig \cap \{x\in \R^{d+1}: x_{d+1}=1\}$ onto $\R^d$. Notice that the cone $\Sig$ is strictly preserved by all $\Ak_t,\ t\ge 0$, by construction, hence by Lemma~\ref{lem-Hilbert}, these IFS are all
uniformly hyperbolic. 
 Both the IFS and their dependence on $t$ are real-analytic, since the IFS are given by rational functions. Condition (\ref{gluk1}) holds for $t\in [0,M]$, for any $M<\infty$, by uniform hyperbolicity and compactness.  Finally, observe that, given $\eps>0$, for $t$ sufficiently large, we have
$$
\pi_q(f_{i,t}(\ov{V})) \subset B_\eps([w_i]_q).
$$
 By construction, $[w_i]_q$ are all distinct, hence Corollary~\ref{cor_sep} applies for $\eps>0$ sufficiently small.  We obtain that for all $t\in [0,\infty)$ outside a set of Hausdorff dimension zero, the IFS $\Fk_t$ satisfies the SESDC condition on a non-empty set, and then Proposition~\ref{prop-Dioph2} implies that the $m$-tuple of matrices $(A_{i,t})_{i\in \Lam}$ is Diophantine for all $t$ outside of a zero-dimensional set, so certainly for Lebesgue-a.e.\ $t$. Now Lemma~\ref{Fubini} yields the desired claim.

\smallskip

(ii) We consider $(SL_{d+1}(\R))^m$ as a codimension-$m$ submanifold of $(GL_{d+1}(\R))^m \subset \R^{(d+1)^2m}$. In the proof of part (i) we showed that for a.e.\ $(A_i)_{i\in \Lam} \in  \mathcal{X}_{\Sig, m}$, the induced IFS on a subset of $\R^d$ satisfies the SESDC condition on a non-empty set. 
%Observe that $(A_i)_{i\in \Lam}$ and $(c_i A_i)_{i\in \Lam}\in \Yk_{\Sig,m}$, where $c_i = (\det A_i)^{-1/d}$, induce the same IFS on $\R^d$. 
Suppose that there is a positive measure subset $\Ek\subset \Yk_{\Sig,m}$ for which the strong Diophantine condition is violated. 
Then for every $(A_i)_{i\in \Lam}\in \Ek$, the induced IFS $\Phi$ does not satisfy the SESDC on a non-empty set by Proposition~\ref{prop-Dioph2}. However, 
$(A_i)_{i\in \Lam}\in \Yk_{\Sig,m}$ and $(c_i A_i)_{i\in \Lam}\in \Xk_{\Sig,m}$, for any $c_i>0$, induce the same IFS on the projective space, and we get a set of positive measure in $\Xk_{\Sig,m}$ for which the  SESDC condition on a non-empty set does not hold. This is a contradiction, and the theorem is proved completely.
\end{proof}

%%%%%%%%%%%%%%%%%%%%%%%%%%%%%%%%%%%%%%%%%

\section{Dimension of the attractor}

%\subsection{$SL_2(\mathbb{R})$ action on $\RP^1$}

Let $A \in SL_2(\mathbb{R})$ be a hyperbolic matrix. Then
$A^{*} A$ has distinct eigenvalues $\| A \|^2 > \| A \|^{-2}$. 
Let $(\cos t_A, \sin t_A)^{\mathrm{t}}$ be the unit eigenvector 
corresponding to the eigenvalue $\| A \|^{-2}$, where $t_A \in [0, \pi)$. 
We recall some basic properties of the map $\varphi_A$, the induced action of $A$ on $\RP^1\cong [0,\pi)$. 
%The map $\varphi_A$ has two fixed points: $t_A$ is the repelling fixed point, and there is another, contracting fixed point.
For more details see sections 2.2, 2.3 and 2.4 in \cite{HS2017}.  
Below we use the Euclidean metric on $[0,\pi)$ and denote by $|F|$ the Lebesgue measure of a measurable $F\subset [0,\pi)$.
The following simple lemma is \cite[Section 2.4]{HS2017}. 

\begin{lem}\label{trivial0}
Let $A \in SL_2(\mathbb{R})$. Then $ \| A \|^{-2} \leq | \varphi_{A}'(x) | \le\| A \|^{2}$  for all $x\in [0,\pi)$.
Furthermore, for any $\eps > 0$ there exists $C_\eps > 1$ such that 
$ | \varphi_{A}'(x) | \le C_\eps \| A \|^{-2}$ 
for all $x \in [0, \pi) \setminus (t_A - \eps, t_A + \eps)$. 
\end{lem}

%\begin{lem}\label{trivial4}
%Let $\eps > 0$. Then there exists $C > 0$ such that 
%for any $i = i_1 \cdots i_n$ we have 
%\begin{equation*}
%C \| A_i \|^{-2} < | \varphi'_{i}(x) | \leq \| A_i \|^{-2} 
%\end{equation*}
%for all $x \in U'$. 
%\end{lem}

%For an open interval $U = (a, b) \subset (0, \pi)$, we denote 
%\begin{equation*}
%U^{\eps} = (a + \eps, b - \eps). 
%\end{equation*}

The following lemma is now immediate.
 
\begin{lem}\label{trivial}
Let $U \subset [0, \pi)$ be an open set, with $|U| < \pi$.
Then, for every $\eps > 0$ there exists $C_\eps > 1$ such that for 
any $A \in SL_2( \mathbb{R} )$ with $(t_A-\eps,t_A+\eps) \subset U$, we have 
\begin{equation*}
\pi - C_\eps \| A \|^{-2} < | \varphi_A( U ) | < \pi.  %- C^{-1} \| A \|^{-2}. 
\end{equation*}
\end{lem}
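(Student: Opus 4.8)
The plan is to deduce Lemma~\ref{trivial} directly from Lemma~\ref{trivial0} by a covering argument. Since $U\subsetneq(0,\pi)$ is open, write $U$ as a countable disjoint union of open intervals; for the qualitative statement it is enough to note $U$ is a bounded open subset of $(0,\pi)$, so $\pi-|U|>0$ and $|\varphi_A(U)|<|[0,\pi)|=\pi$, giving the upper bound trivially (injectivity of $\varphi_A$ on $[0,\pi)\cong\RP^1$ means $\varphi_A$ is a bijection of $\RP^1$, hence $|\varphi_A(U)|=|U|$ would be false — we are measuring in the round metric $d_{\Prob}$, and $\varphi_A$ is a diffeomorphism of $\RP^1$, so $|\varphi_A(U)|<\pi$ simply because $U\ne\RP^1$ and $\varphi_A(U)\ne\RP^1$).

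For the lower bound, fix $\eps>0$ and let $C_\eps>1$ be the constant from Lemma~\ref{trivial0}. Let $A\in SL_2(\R)$ with $(t_A-\eps,t_A+\eps)\subset U$. The complement $[0,\pi)\setminus U$ is contained in $[0,\pi)\setminus(t_A-\eps,t_A+\eps)$, so on every point of $[0,\pi)\setminus U$ we have the derivative bound $|\varphi_A'(x)|<C_\eps\|A\|^{-2}$ from Lemma~\ref{trivial0}. Now $[0,\pi)\setminus U$ is a closed (hence measurable) subset of $\RP^1$ of measure $\pi-|U|\leq\pi$, and since $\varphi_A$ is a bijection of $\RP^1$, we have $\varphi_A([0,\pi)\setminus U)=\RP^1\setminus\varphi_A(U)$. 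Therefore
\begin{equation*}
\pi-|\varphi_A(U)| = |\RP^1\setminus\varphi_A(U)| = \int_{[0,\pi)\setminus U}|\varphi_A'(x)|\,dx \leq C_\eps\|A\|^{-2}\cdot|[0,\pi)\setminus U| \leq C_\eps\,\pi\,\|A\|^{-2}.
\end{equation*}
Absorbing the factor $\pi$ into the constant (replace $C_\eps$ by $\pi C_\eps$, still $>1$) yields $\pi-C_\eps\|A\|^{-2}<|\varphi_A(U)|$, and the strict inequality $|\varphi_A(U)|<\pi$ holds because $\varphi_A(U)$ is a proper open subset. This gives both bounds.

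The only mild subtlety — and the one point to be careful about — is the change-of-variables step: one must justify that $\varphi_A$ restricted to $[0,\pi)\cong\RP^1$ is a $C^1$ bijection of the circle and that the pushforward of Lebesgue measure under $\varphi_A$ has density $|\varphi_A'|$ as in the usual $1$-dimensional substitution formula, applied to the set $[0,\pi)\setminus U$ on which $\varphi_A$ is injective. This is standard: $\varphi_A$ is the Möbius action on $\RP^1$, a real-analytic diffeomorphism, and the formula $|\varphi_A(E)|=\int_E|\varphi_A'|$ holds for any Borel set $E$ on which $\varphi_A$ is injective. I do not anticipate a genuine obstacle here; the lemma is essentially a bookkeeping consequence of Lemma~\ref{trivial0}, with the derivative bound on the complement of a neighborhood of $t_A$ doing all the work.
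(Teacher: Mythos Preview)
Your argument is correct and is exactly the ``immediate'' deduction the paper intends: bound $|\varphi_A'|$ on the complement $[0,\pi)\setminus U\subset [0,\pi)\setminus(t_A-\eps,t_A+\eps)$ using Lemma~\ref{trivial0}, then integrate to control $|\RP^1\setminus\varphi_A(U)|$. The paper gives no proof beyond the word ``immediate,'' so there is nothing further to compare; your expository first paragraph could be trimmed, but the mathematics is right.
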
 

%Lemma \ref{trivial} implies the following: 

%\begin{lem}\label{trivial3}
%Let $U \subset [0, \pi)$ be an open set, with $|U| < \pi$.
%Then, for every $\eps > 0$ there exists $M =M(\eps)> 0$ such that the following holds: 
%for any 
%$A \in SL_2( \mathbb{R} )$ that satisfies 
%$\varphi_A( U ) \subset U$ and $\| A \| > M$, we have $(t_A-\eps,t_A+\eps) \not\subset U$.  
%\end{lem}

Let $\mathcal{A} = \{A_i \}_{i \in \Lambda}$ be a finite collection of $SL_2(\mathbb{R})$ matrices and let $\Phi = \{\varphi_A\}_{A\in \Ak}$ be the corresponding IFS on $[0,\pi)\cong\RP^1$.
%let $\mathcal{F} = \{ f_i \}_{ i \in \Lambda }$ be the associated linear fractional transformations on $\mathbb{R}$.  
 We continue to use the notation: $$\Phi(E) = \bigcup_{A\in \Ak} \varphi_A(E).$$
Recall that a strictly invariant multicone $U\subset [0,\pi)$ is  a nonempty open set having finitely many connected components with disjoint closures, such that $\ov{U}\ne \RP^1$ and 
$\Phi(\ov{U})\subset U$. %We can assume that $\ov{U}\subset (0,\pi)$ without loss of generality, conjugating the IFS by a common rotation, if necessary.
By Theorem~\ref{th-unihyp}, the set $\A$ is hyperbolic, which means that there exist $c>0$ and $\lam>1$ such that
\begin{equation} \label{eq-hyp}
\|A_\bi\| \ge c\lam^n\ \ \mbox{for all}\ \bi\in \Lam^n,\ n\in \N.
\end{equation}
%Fix $\eps>0$ such that the $(2\eps)$-neighborhood of $\Phi(\ov{U})$ is contained in $U$, and let $M=M(\eps)$ from Lemma \ref{trivial3}. By (\ref{eq-hyp}), there exists $n_0\in \N$ such that  $\|A_\bi\|>M$ for $\bi\in \Lam^n$, $n\ge n_0$. Lemma \ref{trivial3} implies that $(t_{A_\bi}-\eps, t_{A_\bi}+\eps) \not \subset U$, hence
%$$
%(t_{A_\bi}-\eps, t_{A_\bi}+\eps) \cap \Phi(\ov{U}) = \emptyset,\ \ \mbox{for all}\ \bi\in \Lam^n,\ n\ge n_0.
%$$
%Hence, by Lemmas \ref{trivial0} and \ref{trivial} we obtain
%\begin{equation}\label{eq-BDP}
%\|A_\bi\|^{-2} \le |\varphi'_{\bi}(x)| \le C_\eps \|A_\bi\|^{-2},\ \ \mbox{for all}\ \ x\in \ov{U},\ \bi\in \Lam^n,\ n\ge n_0.
%\end{equation}

%Thus we obtain

\begin{lem} \label{lem-contract} Let $U$ be a strictly invariant multicone for the IFS $\Phi$. Then there exists a constant $C_1>1$  such that

{\rm (i)} we have
\begin{equation}\label{eq-BDP}
\|A_\bi\|^{-2} \le |\varphi'_{\bi}(x)| \le C_1 \|A_\bi\|^{-2},\ \ \mbox{for all}\ \ x\in \ov{U},\ \bi\in \Lam^n,\ n\in \N;
\end{equation}

{\rm (ii)} the Bounded Distortion Property holds for $\Phi$ on $U$: 
\begin{equation} \label{eq-bdp}
\frac{1}{C_1} \le \frac{|\varphi_{\bi}'(x)|}{|\varphi_{\bi}'(y)|} \le C_1\ \ \mbox{for all}\ \ x,y\in \ov{U}, \ \bi\in \Lam^n,\ n\in \N;
\end{equation}

{\rm (iii)} we have
\begin{equation} \label{eq-HYP}
r_1^n \le |\varphi_{\bi}'(x)|\le C_2\,\lam^{-2n}, \ \mbox{for all}\ \ x\in \ov{U},\ \bi\in \Lam^n,\ n\in \N,
\end{equation}
where $C_2 = C_1/c^2$, with $c>0$ and $\lam>1$  from (\ref{eq-hyp}), and $r_1 = \left(\max_{A\in \Ak}\|A\|\right)^{-2}$;

{\rm (iv)} we have
$$
s = s_\Ak/2,
$$
where $s$ is the unique solution of  Bowen's equation $P_\Phi(s)=0$, with the pressure given by (\ref{Bowen0}) and $s_\Ak$ is the critical exponent, given by (\ref{critic}).
\end{lem}

\begin{proof}
(i) In view of Lemma~\ref{trivial0}, we only need to check the right inequality in (\ref{eq-BDP}). By Lemma~\ref{trivial} and strict invariance of $U$, we have for every $A_\bi$, with $ \bi\in \Lam^n$ and $n$ sufficiently large, that $t_{A_\bi}\not\in U$.  
Since $\Phi(\ov U) \subset U$, there exists $\eps>0$ such that 
$$
(t_{A_\bi}-\eps, t_{A_\bi}+\eps) \cap \Phi(\ov{U}) = \emptyset,\ \ \mbox{for all}\ \bi\in \Lam^n,\ n\ge n_0.
$$
For every $\bi = i_1\ldots i_n$, with $n\ge n_0$, and every $x\in \ov{U}$,
\begin{eqnarray*}
|\varphi'_\bi(x)| & \le &  |\varphi'_{i_1\ldots i_{n-1}}(\varphi_{i_n}(x))|\cdot |\varphi_{i_n}'(x)|  \\
& \le  & C_\eps \|A_{i_1\ldots i_{n-1}}\|^{-2}\|A_{i_n}\|^2  \mbox{\ \ \ \ \ \ \ \ \  \ \ \ \ \ \ \ \ \ \ by Lemma~\ref{trivial0}}\\
& \le & C_\eps (\max_{A\in \Ak}\|A\|)^4 \|A_\bi\|^{-2}.
\end{eqnarray*}
This confirms (\ref{eq-BDP}) for $n\ge n_0$, with $C_1:= C_\eps (\max_{A\in \Ak}\|A\|)^4$, and for the first finitely many $n$ it trivially holds with some constant.

(ii) is  immediate from (i).  

(iii) the left inequality follows from Lemma~\ref{trivial0}, and the right is a consequence of (i) and (\ref{eq-hyp});

(iv) easily follows from (\ref{Bowen0}) and (\ref{critic}).
\end{proof}

Let $p = ( p_i )_{i \in \Lambda}$ be a probability vector, and let $x_0 \in U$. 
Let $\chi_{\Phi, p}$ be the almost sure value of the limit 
\begin{equation}\label{lim}
\lim_{n \to \infty} - \frac{1}{n} \log |  ( \varphi_{ i_1 \cdots i_n } )' (x_0)  |, 
\end{equation}
where $i_1, i_2, \cdots \in \Lambda$ is a sequence chosen  
randomly according to the probability vector $p = ( p_i )_{ i \in \Lambda}$. 
The equations (\ref{eq-BDP}) and (\ref{Lyap1}) imply 
\be \label{eq-Lyap}
\chi_{ \Phi, p } = 2 \chi_{ \mathcal{A}, p }.
\ee

Consider the symbolic space $\Lam^\N$ with the measure $\mu=p^\N$,  and the shift transformation $\sig: \bi = i_1 i_2 i_3 \ldots \mapsto i_2 i_3 \ldots$, which is $\mu$-invariant and ergodic. The natural projection $\Pi:\Lam^\N\to \RP^1$ is defined by
$$
\Pi(\bi) = \lim_{n\to \infty} \varphi_{i_1\ldots i_n} (x_0),\ \ x_0\in \ov U,
$$
where the limit exists and is independent of $x_0$ by contraction properties of the IFS. Observe that
\be \label{eq-pro}
\varphi_{i_1} (\Pi(\sig\bi) ) = \Pi(\bi)\ \ \mbox{for all}\ \bi\in \Lam^\N.
\ee
Consider the function $h: \Lam^\N\to \R$ defined by
$$
h(\bi) = - \log|\varphi_{i_1}'(\Pi(\sig\bi))|,\ \bi \in \Lam^\N.
$$
Since $|\varphi_A'|$ is bounded away from $0$ and $\infty$ on $\ov U$, we have that $h$ is integrable with respect to $\mu$. The following lemma is standard, but we include the proof for completeness.

\begin{lem} \label{lem-Lyap} We have
$$
\chi_{\Phi,p} = -\int_{\Lam^\N} \log\bigl|\varphi_{i_1}'(\Pi(\sig\bi))\bigr|\,d\mu(\bi).
$$
\end{lem}

\begin{proof}
In view of the bounded distortion (\ref{eq-bdp}), for $\mu$-a.e.\ $\bi$,
\begin{eqnarray*}
\chi_{\Phi,p} & = & \lim_{n \to \infty} - \frac{1}{n} \log |  ( \varphi_{ i_1 \cdots i_n } )' (x_0)  | \\
& = & \lim_{n \to \infty} - \frac{1}{n} \log \left|  ( \varphi_{ i_1 \cdots i_n } )^{'} ( \Pi (\sig^n \bi)) \right| \\
& = & \lim_{n\to \infty} - \frac{1}{n}  \sum_{k=0}^{n-1} h(\sig^k \bi).
\end{eqnarray*}
using the chain rule and (\ref{eq-pro}) in the last step. The proof is finished by an application of the Birkhoff Ergodic Theorem.
\end{proof}

In order to apply Theorem~\ref{thmHS} we need the assumption of total irreducibility. It is achieved with the help of the following lemma.

\begin{lem} \label{lem-irred}
Let $\A$ be a finite set of $SL_2(\R)$ matrices having a strictly invariant multicone $U$, and let $\Phi_\A$ be the associated IFS on $U$. If the attractor of $\Phi_\A$ is not a singleton (equivalently, not all attracting fixed points of $\varphi_A$, $A\in \A$, coincide), then either

{\bf (a)}  the group generated by $\Ak$ is totally irreducible, or 

{\bf (b)} $\Ak$ is conjugate in $SL_2(\R)$ to a finite subset of upper triangular matrices with the upper-left entry of absolute value less than one. In the latter case the IFS $\Phi_\A$ is conjugate, via a linear-fractional transformation, to a linear contracting IFS (an IFS of contracting affine linear maps) on $\R$.
\end{lem}

\begin{proof}
By assumption, the attractor of $\Phi_\A$ is not a singleton, hence it contains at least two distinct fixed points of some $\varphi_{A_1}$ and $\varphi_{A_2}$, and then it is perfect.
%
%necessarily a (topological) Cantor set. (Indeed, for $n\in \N$ sufficiently large we get a sub-IFS $\{\phi_{A_1^n}, \phi_{A_2^n}\}$ satisfying the Strong Separation Condition, and it is well-known that such an IFS has the attractor which is a  Cantor set). 
Suppose that the group generated by $\Ak$ is not totally irreducible. Then there is a finite invariant subset $F$ for $\Phi_\A$. If $F$ intersects $\ov{U}$, we get a contradiction, since the forward orbit of any point in $\ov{U}$ under the semigroup generated by $\Phi_\A$ is dense in the attractor. Thus $F$ is contained in the complement $\ov{U}^c$. However, notice that the IFS $\Phi_{\A^{-1}}$, generated by the inverses of the matrices from $\Ak$, is contracting on $U^c$ (this is because $f(\ov{U}) \subset U$ implies $f^{-1}(U^c) \subset \ov{U}^c$). By the previous argument, since $F$ is finite, it must be a singleton. Hence all the attracting points of $\varphi_{A^{-1}}$, that is, all the repelling points of $\varphi_A,\ A\in \A$, coincide. However, the latter implies that we can conjugate the set of matrices in such a way that the repelling point is $0\in [0,\pi)\approx \RP^1$. The resulting matrices are upper-triangular, with the upper-left entry of absolute value less than one. The corresponding IFS on $\R^*$ has the repelling point at $\infty$, which means that we get a contracting linear IFS on $\R$.
\end{proof}

\begin{proof}[Proof of Theorem \ref{thm-attr}] Recall that $\A$ is a finite set of $SL_2(\R)$ matrices satisfying the strong Diophantine condition and having a strictly invariant multicone $U$, and $\Phi=\Phi_\Ak$ is the associated IFS on $U$. Then $\Phi$ has a compact attractor
$K$ (not a singleton by assumption), and our goal is to show that $\dim_H(K) =\min\{1,s\}$, where $s$ is the unique solution of Bowen's equation (\ref{Bowen0}). We have two cases to consider. Note that both the assumption and the conclusion of the theorem are invariant under conjugacy. If we are in the case (b) of Lemma~\ref{lem-irred}, then applying Hochman's Theorem \cite[Cor 1.2]{Hochman2014} on the dimension of self-similar sets in $\R$, yields the result. 
One only needs to note that the strong Diophantine condition on the set of $SL_2(\R)$ matrices described in case (b) 
implies the strong exponential separation condition on a finite subset of $\R$. We postpone the proof of this fact to Lemma~\ref{lem-HS} in the Appendix.
In the case (a) of Lemma~\ref{lem-irred}, the group generated by $\Ak$ is totally irreducible, and we will assume this for the rest of the proof. 

It is known that 
\begin{equation}\label{dim_ineq}
\dim_H (K) \leq \min\{1,s\},
\end{equation}
see the appendix for a  short proof. 
Let us show the opposite inequality.  

Let $d_n > 0$ be the solution of the equation 
\begin{equation*}
\sum_{\bi\in \Lam^n} | U_\bi |^{d_n} = 1, 
\end{equation*}
where $U_\bi =\varphi_\bi(U)$.
%In \cite{SS}, the authors showed that 
It is not hard to see that
\begin{equation}\label{dim_lim}
\lim_{n \to \infty} d_n = s.
\end{equation}
For convenience of the reader, we include the proof in the appendix, following  \cite{SiSo1999}.

Let $p^{(n)} = ( p^{(n)}_\bi )_{\bi \in \Lam^n}$ be the probability vector 
such that $p^{(n)}_\bi = | U_\bi |^{d_n}$. Denote
$$
\Phi^n = \{\varphi_\bi:\ \bi\in \Lam^n\},\ n\in \N.
$$
Let $\eta^{(n)}$ be the invariant probability measure for the IFS $\Phi^n$ on $\ov U$, corresponding to $p^{(n)}$. 
Since $\eta^{(n)}$ is supported on $K$, 
we have $\dim \eta^{(n)} \leq \dim_{H} (K)$. 

Note that $\Ak$ satisfies the assumptions of Theorem~\ref{thmHS}. Indeed, the existence of a strictly invariant multicone  implies that all the matrices in $\Ak$ are hyperbolic, hence the group generated by $\Ak$ is unbounded, and we are now under the total irreducibility assumption.
Thus the Furstenberg measure for 
$(\Ak^n,p^{(n)})$ is unique, and it coincides with $\eta^{(n)}$.  Since $\Ak$ is strongly Diophantine, we have that $\Ak^n$ is strongly Diophantine as well.  
Now, by Theorem \ref{thmHS} and (\ref{eq-Lyap}) we have 
\begin{equation*}
\min\Bigl\{1,\frac{ H( p^{(n)} ) }{ \chi_{ \Phi^n, p^{(n)} } }\Bigr\} \leq \dim_{H} (K). 
\end{equation*}

We claim that there exists $C > 0$ such that 
\begin{equation} \label{claim2}
\chi_{\Phi^n, p^{(n)} } \le -\sum_{ \bi \in \Lambda^n } | U_\bi |^{d_n} \log | U_\bi | + C\ \ \mbox{for all}\ \ n\in \N.  
\end{equation}
Indeed, by Lemma~\ref{lem-Lyap}, 
$$
\chi_{\Phi^n, p^{(n)} } \le \sum_{\bi \in\Lam^n} \mu^{(n)}([\bi])\cdot \log\Bigl[\bigl(\min_{x\in \ov{U}}|\varphi_\bi'(x)|\bigr)^{-1}\Bigr],
$$
where $[\bi]$ is the cylinder set of sequences starting with $\bi$. 
Here $\mu^{(n)}$ is the Bernoulli measure on $(\Lam^n)^\N$, with $\mu^{(n)}([\bi]) = |U_\bi |^{d_n}$ for $\bi \in \Lam^n$.
We have
\be \label{note1}
|U_\bi | \le |U|\cdot \max_{x\in \ov{U}}|\varphi_\bi'(x)| \le C_1 |U|\cdot \min_{x\in \ov{U}}|\varphi_\bi'(x)|,
\ee
by the Bounded Distortion Property (\ref{eq-bdp}).
Therefore,
\begin{equation*}
\begin{aligned}
\chi_{ \Phi^n, p^{(n)} }  
%\sum_{ i \in \Lambda^n } \int_{ j \in [i] } \log ( f^{-1}_{j_1} )'( \pi(j) ) \, d\mu^{(n)} \\
& \le  \sum_{ \bi \in \Lambda^n } | U_\bi |^{d_n} \log \Bigl(\frac{ C_1 | U | }{ | U_\bi | } \Bigr) \\
&= -\sum_{\bi \in \Lambda^n} | U_\bi |^{d_n} \log | U_\bi | + \log (C_1 | U |),
\end{aligned}
\end{equation*}
confirming (\ref{claim2}).
Now we can estimate 
\begin{equation*}
\begin{aligned}
\frac{ H( p^{(n)} ) }{ \chi_{ \Phi^n, p^{(n)} } } & \ge  
\frac{ -\sum_{ \bi \in \Lambda^n } | U_\bi |^{d_n} \log\left( | U_\bi |^{d_n}\right)}
{ -\sum_{ \bi \in \Lambda^n } | U_\bi |^{d_n} \log | U_\bi | + C } \\
&= d_n \left( 1 + \frac{C}{ -\sum_{ \bi \in \Lambda^n } | U_\bi |^{d_n} \log | U_\bi | } \right)^{-1}. 
\end{aligned}
\end{equation*}
Since $\lim_{n \to \infty} d_n = s$ and 
\begin{eqnarray*}
-\sum_{ \bi \in \Lambda^n } | U_\bi |^{d_n} \log | U_\bi | & \ge & \sum_{ \bi \in \Lambda^n } | U_\bi |^{d_n} \cdot \left(-\log|U| - \log C_2 + 2n\log \lam\right) \\
& = & -\log|U| - \log C_2 + 2n\log \lam \to \infty,\ n\to \infty,
\end{eqnarray*}
by (\ref{note1}) and Lemma~\ref{lem-contract}(iii),
%$\lim_{n \to \infty} -\sum_{ \bi \in \Lambda^n } | U_\bi |^{d_n} \log | U_\bi | = \infty$,  
we obtain $\min\{1,s\} \leq \dim_{H} (K)$, as desired. Finally, $s=s_\Ak/2$ by Lemma~\ref{lem-contract}(iv).
\end{proof}

%%%%%%%%%%%%%%%%%%%%%%%%%%%%%%%%%%

\section{Appendix: miscellaneous proofs} \label{appendix}
%the proof of (\ref{dim_ineq}) and (\ref{dim_lim})}\label{appendix}
 
\subsection{Proof of (\ref{dim_lim}) \cite{SiSo1999}} We have a projective IFS $\Phi = \{\varphi_i\}_{i\in \Lam}$ on a strictly invariant multicone $U$.
Observe that
$$
P_\Phi(t) = \lim_{n\to \infty} \frac{1}{n} \log \sum_{\bi \in \Lam^n} \|\varphi_\bi'\|^t = \lim_{n \to \infty} \frac{1}{n} \log \sum_{\bi \in \Lambda^n} \left| U_{\bi} \right|^t,
$$
 by the Bounded Distortion Property (\ref{eq-bdp}). Let
\begin{equation*}
Q_n = \frac{1}{n} \log \sum_{\bi \in \Lambda^n} \left| U_{\bi} \right|^s. 
\end{equation*}
Since $P_\Phi(s ) = 0$, we have $\lim_{n \to \infty} Q_n = 0$. 
%Let $r_1>0$ be such that $r_1 \le |\varphi_i'(x)|$ for all $i \in \Lambda$ and $x \in \ov{U}$. 
Recall (\ref{eq-HYP}), which says that
$$
r_1^n \le {\|\varphi'_\bi\|}_{\ov{U}} \le C_2 \lam^{-2n},
$$
for $r_1\in (0,1), C'_2>0$, and $\lam>1$.
Then 
$r^n_1 |U| \le  |U_\bi| \le C_2\lam^{-2n} |U|$ for $\bi\in\Lam^n$, and hence we have
$$
|U_\bi|^s \in |U_\bi|^{d_n} \cdot \left[(r_1^n |U|)^{s-d_n}, (C_2 \lam^{-2n})^{s-d_n}\right],
$$
where we use the convention  that $[a,b]=[b,a]$ if $a>b$.
%\begin{equation*}
%( r^n_1 |U| )^{s-d_n} \cdot |U_\bi|^{d_n} < |U_\bi|^s <  C'' ( \lam^{-2n} |U| )^{s-d_n} \cdot |U_\bi|^{d_n}. 
%\end{equation*}
In view of  $\sum_{\bi\in \Lambda^n} |U_\bi|^{d_n} = 1$, we have
$$
Q_n \in (d_n-s)\cdot \left[2\log\lam - \frac{\log C_2}{n}\,,\, -\log r_1 -\frac{\log|U|}{n}\right],
$$
whence for $n$ sufficiently large,
$$
d_n - s \in Q_n \cdot  \left[\left(-\log r_1 - \frac{\log|U|}{n}\right)^{-1}, \left(2\log \lam -\frac{\log C_2}{n}\right)^{-1} \right],
$$
%\begin{equation*}
%\frac{1}{n} \log ( r^n_1 |U| )^{s-d_n} < Q_n < \frac{1}{n} \bigl[\log C'' + (s-d_n)(\log|U| - 2n\log\lam)\bigr],
%\end{equation*}
%and it follows that 
%\begin{equation*}
%\frac{Q_n -(\log C'')/n}{ -2\log \lam+ ( \log |U| ) / n } < s - d_n < \frac{Q_n}{ \log r_1 + ( \log |U| ) / n },
%\end{equation*}
which implies $d_n\to s$, as desired. \qed

\subsection{Proof of (\ref{dim_ineq})}
Fix $\eps > 0$. Then for sufficiently large $n$ we have 
$d_n < s + \eps/2$, and
\begin{equation*}
\sum_{\bi \in \Lambda^n} | U_\bi |^{s + \eps} \le \sum_{\bi \in \Lambda^n} | U_\bi |^{d_n + \eps/2} 
\le  (C_2\lam^{-2n} |U| )^{\eps/2} \to 0, \text{\ as } n \to \infty. 
\end{equation*}
Therefore, the $(s + \eps)$-dimensional Hausdorff measure of $K$ is zero. 
By the definition of the Hausdorff dimension, this proves (\ref{dim_ineq}). \qed

\subsection{From the Diophantine property to exponential separation}

\begin{lem} \label{lem-HS}
Suppose that $\A$ is a finite strongly Diophantine set of upper-triangular $SL_2(\R)$ matrices with the upper-left entry of absolute value less than one. Then the associated IFS satisfies the strong exponential separation condition on the set $\{0,1\}$.
\end{lem}

Instead of $\{0,1\}$ one can take any two distinct points in $\R$.
We start with the following general elementary claim.

\medskip

\noindent {\bf Claim.} {\em
Let $G$ be a matrix group, and  $\Ak=\{A_1,\ldots,A_m\} \subset G$ is a finite strongly Diophantine set. Suppose that $\widetilde{\Ak}=\{\wt A_1,\ldots, \wt A_m\}$ is such that $\wt A_j = \pm A_j$ for all $j\le m$. Then $\wt \Ak$ is strongly Diophantine as well.}

\begin{proof}[Proof of the claim] We argue by contradiction and assume that for any $\eps>0$ there exist $n\in \N$ and $\bi\ne \bj$ in $\Lam^n$  such that
$$\|\wt A_\bi - \wt A_\bj\| = \|A_\bi \pm A_\bj\| \le \eps^n.
$$
For $\eps>0$ sufficiently small, $\|A_\bi - A_\bj\| \le \eps^n$ is impossible, thus $\|A_\bi + A_\bj\|\le \eps^n$.
But then we have
\begin{eqnarray*}
\|A_{\bi\bi} - A_{\bj\bj}\| = \|A_\bi^2 - A_\bj^2\| & = & \|(A_\bi + A_\bj)A_\bi - A_\bj(A_\bi + A_\bj)\| \\
& \le & \eps^n(\|A_\bi\| + \|A_\bj\|) \le 2C^n \eps^n,
\end{eqnarray*}
where $C= \max_{A\in \Ak} \|A\|$. This contradicts the strong Diophantine property of $\Ak$ for $\eps>0$ sufficiently small.
\end{proof}

\begin{proof}[Proof of the lemma]
Observe that $A$ and $-A$ act identically on $\RP^1$. 
In view of the Claim, we can assume that the matrices of $\Ak$ have the form
$$
A_j = \left[ \begin{array}{cc} \sqrt{\lam_j} & a_j/\sqrt{\lam_j} \\[1.2ex] 0 & 1/\sqrt{\lam_j} \end{array} \right],\ \ \mbox{with}\ \ \lam_j \in (0,1),\ a_j\in \R.
$$
All matrices of the semigroup generated by $\Ak$ have a similar upper-triangular form: $$A_\bj = \left[ \begin{array}{cc} \sqrt{\lam_\bj} & a_\bj/\sqrt{\lam_\bj} \\[1.2ex] 0 & 1/\sqrt{\lam_\bj} \end{array} \right],\ \ \mbox{with}\ \ \bj \in \Lam^n,\ \lam_\bj = \lam_{j_1}\cdots  \lam_{j_n},\ a_\bj \in \R.
$$
The matrices of $\Ak$ act on $\RP^1 \approx \R^*$ by $A_j \cdot x = \lam_j x + a_j$. Assume that $\Phi_\Ak$ does not satisfy the strong exponential separation condition on $\{0,1\}$. Then for any $\eps>0$ there exist $n\in \N$ and $\bi \ne \bj$ in $\Lam^n$ such that $|a_\bi - a_\bj| \le \eps^n$ and 
$$
|(\lam_\bi + a_\bi) - (\lam_\bj+ a_\bj)| \le \eps^n \implies |\lam_\bi - \lam_\bj| \le 2\eps^n.
$$
It follows that
$$
|\sqrt{\lam_\bi} - \sqrt{\lam_\bj}|  = \frac{|\lam_\bi - \lam_\bj|}{\sqrt{\lam_\bi} + \sqrt{\lam_\bj}} \le \frac{2\eps^n}{2\min_{\bj\in \Lam^n} \sqrt{\lam_j}} \le \Bigl(\frac{\eps}{\min_{j\in \Lam}\sqrt{\lam_j}}\Bigr)^n.
$$
In a similar elementary way one can estimate from above the expressions $|1/\sqrt{\lam_\bi} - 1/\sqrt{\lam_\bj}|$ and $|a_\bi\sqrt{\lam_\bi} - a_\bj/\sqrt{\lam_\bj}|$ to conclude
$$
\|A_\bi - A_\bj\|\le (C'\eps)^n,
$$
for some $C'>0$, depending only on $\Ak = \{A_j\}_{j\in \Lam}$. This contradicts the strong Diophantine property of $\Ak$.
\end{proof}

\section*{Acknowledgements}
Thanks to  Balazs Bara\'ny for helpful discussions and for telling us about the papers \cite{ABY2010,BG2009}. We are deeply grateful to the referees for many helpful comments and suggestions, as well as pointing out several gaps in the original proofs.

\bibliographystyle{plain}
\bibliography{Solomyak_Takahashi}

\end{document}